\documentclass[11pt,a4paper]{article}
\usepackage{mycommands}

\setgcgap{0.08}  

\newcommand{\opmo}{$0$/$\pm1$}
\newcommand{\drm}{M^{\!\times2}}
\newcommand{\grm}{G^{\times\!}}
\newcommand{\tmm}{\mathbb{M}(M)}

\title{\textbf{Growth rates of \\
geometric grid classes of permutations
}}

\author{$\phantom{{}^\dagger}$David Bevan${}^\dagger$}

\date{}

\begin{document}
\maketitle

{\let\thefootnote\relax\footnotetext
{${}^\dagger$Department of Mathematics and Statistics, The Open University, Milton Keynes, England.}}

{\let\thefootnote\relax\footnotetext
{2010 Mathematics Subject Classification: 05A05, 
05A16, 
05C31. 
}}

\begin{abstract}
\noindent
Geometric grid classes of permutations have proven to be 
key in investigations of
classical permutation pattern classes.
By considering the representation of
gridded permutations as words in a trace monoid,
we prove that
every
geometric grid class
has a
growth rate 
which is given by 
the square of the largest root of the matching polynomial of
a related graph.
As a consequence, we characterise the set of growth rates of geometric grid classes in terms of the spectral radii of trees,
explore the influence of ``cycle parity'' on the growth rate, compare the growth rates of geometric grid classes against those of the corresponding
monotone grid classes, and present new results concerning the effect of edge subdivision on the largest root of the matching polynomial.
\end{abstract}

\section{Introduction}

Following the proof by Marcus \& Tardos~\cite{MT2004} of the Stanley--Wilf conjecture, there has been particular interest in the growth rates of permutation classes. 
Kaiser \& Klazar~\cite{KK2003} determined the possible growth rates less than $2$,
and then
Vatter~\cite{Vatter2011} characterised all the (countably many) permutation classes with growth rates below $\kappa\approx2.20557$ and
established that there are uncountably many permutation classes with growth rate $\kappa$.
Critical to these results has been the consideration of \emph{grid classes} of permutations, and particularly of \emph{geometric} grid classes.
Geometric grid classes have also been used to achieve the enumeration of some specific permutation classes~\cite{AAB2012,AAV2014}.
Following initial work
on particular geometric grid classes
by Waton~\cite{WatonThesis},
Vatter \& Waton~\cite{VW2011b},
and Elizalde~\cite{Elizalde2011},
their general structural properties have been
investigated
in articles
by Albert, Atkinson, Bouvel, Ru\v{s}kuc \& Vatter~\cite{AABRV2011}.
and Albert, Ru\v{s}kuc \& Vatter~\cite{ARV2012}.
We build on their work to establish the growth rate of any given geometric grid class.
Before we can state our result, we need a number of definitions.

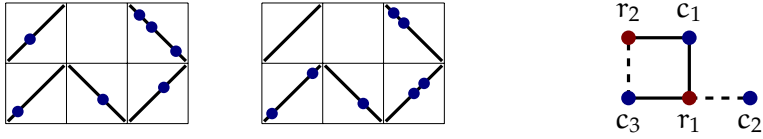
\begin{figure}[ht]
$$
{
\setgcscale{0.8}
\setgcgap{0.037}
\setgcptgridscale{5}
\setgcptsize{0.095}
\setgcptcolor{blue!50!black}
\gctwo[-1, 1,7,-1,-1, -1, -1,-1,2,-1, -1, 9,8,3,6]{3}{1,0,-1}{1,-1,1}
\qquad
\setgcptgridscale{6}
\gctwo[-1, 1,-1,-1,-1,5, -1, -1,-1,-1,2,-1, -1, 11,10,3,4]{3}{1,0,-1}{1,-1,1}
}
\qquad\qquad\quad
 \raisebox{-.15in}{
  \begin{tikzpicture}[scale=0.8]
      \draw [very thick] (0,0)--(1,0)--(1,1)--(0,1);
      \draw [very thick,dashed] (0,0)--(0,1);
      \draw [very thick,dashed] (1,0)--(2,0);
      \fill[radius=0.12,black!50!blue] (0,0) circle ;
      \fill[radius=0.12,black!50!red] (1,0) circle ;
      \fill[radius=0.12,black!50!blue] (2,0) circle;
      \fill[radius=0.12,black!50!red] (0,1) circle ;
      \fill[radius=0.12,black!50!blue] (1,1) circle ;
      \node[below]at(0,-.1){$c_3$};
      \node[below]at(1,-.1){$r_1$};
      \node[below]at(2,-.1){$c_2$};
      \node[above]at(0,1.1){$r_2$};
      \node[above]at(1,1.1){$c_1$};
    \end{tikzpicture}}
$$
\caption[figGeomClass]{
At left: The standard figure for
$\begin{geommx}1&\pos0&-1\\ 1&-1&\pos1 \end{geommx}$,
showing two 
plots
of the permutation $1527634$
with distinct griddings.
At right: Its row-column graph;
positive edges are shown as solid lines, negative edges are dashed.
}\label{figGeomClass}
\end{figure}
A geometric grid class is specified by a \opmo{} matrix which represents the shape of plots of permutations in the class.
To match the Cartesian coordinate system, we index these matrices from the lower left, by column and then by row.
If $M$ is such a matrix, then we say that the \emph{standard figure} of $M$, denoted  $\Lambda_M$, is the subset of $\mathbb{R}^2$ consisting of the
union of
oblique
open line segments $L_{i,j}$ with slope $M_{i,j}$ for each $i,j$ for which
$M_{i,j}$ is nonzero,
where $L_{i,j}$ extends
from $(i-1,j-1)$ to $(i,j)$
if $M_{i,j}=1$, and
from $(i-1,j)$ to $(i,j-1)$
if $M_{i,j}=-1$.
The \emph{geometric grid class} $\Geom(M)$ is
then defined to be
the set of
permutations $\sigma_1\sigma_2\ldots\sigma_n$ that can be plotted as a subset of the standard figure, i.e.~for which there exists a sequence of points $(x_1,y_1),\ldots,(x_n,y_n)\in\Lambda_M$ 
such that $x_1<x_2<\ldots<x_n$ and the sequence $y_1,\ldots,y_n$ is order-isomorphic to $\sigma_1,\ldots,\sigma_n$.
See Figure~\ref{figGeomClass} for an example.

If $g_n$ is the number of permutations of length $n$ in
$\Geom(M)$,
then the
\emph{growth rate}
of the class is given by
$\gr(\Geom(M))=\liminfty g_n^{\,1/n}$.
We will demonstrate that this limit exists\footnote{It is widely believed that all permutation classes have growth rates.
The proof of the Stanley--Wilf conjecture by
Marcus \& Tardos~\cite{MT2004} establishes only that each has an \emph{upper} growth rate ($\limsup g_n^{\,1/n}$).} and determine its value for any given \opmo{} matrix $M$.

Much of the structure of a geometric grid class is reflected in a 
graph that
we associate with the underlying matrix.
If $M$ is a \opmo{} matrix of dimensions $t\ttimes u$, the \emph{row-column graph} $G(M)$ of $M$ is the bipartite graph
with vertices $r_1,\ldots,r_t,c_1,\ldots,c_u$ and an edge between $r_i$ and $c_j$ if and only if $M_{i,j}\neq0$.
We label each edge $r_ic_j$ with the value of $M_{i,j}$.
Edges labelled $+1$ are called \emph{positive}; edges labelled $-1$ are called \emph{negative}.
See Figure~\ref{figGeomClass} for an example.

\begin{figure}[ht]
$$
{
\setgcscale{0.4}
\setgcgap{0.04}
\setgcarrowmode
\setgcarrowtip{stealth}
\gcfour{6}{0,3,0,0,-2}{2,0,0,0,0,-3}{0,3,-2,0,0,3}{2,0,0,-3,2}
}
\qquad\quad\qquad
 \raisebox{-.3625in}{
  \begin{tikzpicture}[scale=0.8]
      \draw [very thick] (1.707,-0.707)--(1,0)--(1.707,0.707)--(2.707,0.707);
      \draw [very thick] (2.707,-0.707)--(3.707,-0.707)--(4.414,0)--(3.707,0.707);
      \draw [very thick,dashed] (0,0)--(1,0);
      \draw [very thick,dashed] (1.707,-0.707)--(2.707,-0.707);
      \draw [very thick,dashed] (2.707,0.707)--(3.707,0.707);
      \draw [very thick,dashed] (4.414,0)--(5.414,0);
      \fill[radius=0.12,black!50!blue] (0,0) circle ;
      \fill[radius=0.12,black!50!red] (1,0) circle ;
      \fill[radius=0.12,black!50!blue] (1.707,-0.707) circle;
      \fill[radius=0.12,black!50!blue] (1.707,0.707) circle;
      \fill[radius=0.12,black!50!red] (2.707,-0.707) circle ;
      \fill[radius=0.12,black!50!red] (2.707,0.707) circle ;
      \fill[radius=0.12,black!50!blue] (3.707,-0.707) circle;
      \fill[radius=0.12,black!50!blue] (3.707,0.707) circle;
      \fill[radius=0.12,black!50!red] (4.414,0) circle ;
      \fill[radius=0.12,black!50!blue] (5.414,0) circle ;
      \node[above]at(0,.1){$c'_2$};
      \node[above]at(1,.1){$r_1\,$};
      \node[above]at(1.707,0.807){$c_1$};
      \node[above]at(2.707,0.807){$r_2$};
      \node[above]at(3.707,0.807){$c'_3$};
      \node[above]at(4.414,.1){$\,r'_1$};
      \node[above]at(5.414,.1){$c_2$};
      \node[below]at(1.707,-.807){$c_3$};
      \node[below]at(2.707,-.807){$r'_2$};
      \node[below]at(3.707,-.807){$c'_1$};
    \end{tikzpicture}}
$$
\caption[figDoubleRefinement]{At left: The standard figure of
$\begin{smallmx}1&\pos0&-1\\ 1&-1&\pos1 \end{smallmx}^{\!\times2}$,
with a consistent orientation marked. At right:
Its row-column graph.
}\label{figDoubleRefinement}
\end{figure}
We need one final definition related to geometric grid classes.
If $M$ is a \opmo{} matrix of dimensions $t\ttimes u$, we define the \emph{double refinement} $\drm$ of $M$ to be the \opmo{} matrix of dimensions $2\+t\ttimes 2\+u$ obtained from $M$ by replacing each $0$ with $\begin{smallmx}0&0\\0&0\end{smallmx}$, each $1$ with $\begin{smallmx}0&1\\1&0\end{smallmx}$, and each $-1$ with $\begin{smallmx}-1&\pos0\\ \pos0&-1\end{smallmx}$.
See Figure~\ref{figDoubleRefinement} for an example.
Note that the standard figure of $\drm$ is essentially a scaled copy of the standard figure of $M$, so we have:
\begin{obs}\label{obsGeomDouble}
$\Geom(\drm)=\Geom(M)$ for any \opmo{} matrix $M$.
\end{obs}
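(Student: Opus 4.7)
The plan is to verify directly that $\Lambda_{\drm}$ is, up to a uniform scaling by a factor of $2$, the same as $\Lambda_M$ except for the omission of finitely many interior points of its open segments. A uniform scaling of the plane preserves the relative order of both coordinates, and the set of permutations plottable on a union of open line segments is unaffected by removing finitely many interior points (any plotted point can be nudged slightly along its segment to avoid them, without disturbing the $x$-ordering or the order-type of the $y$-values). So this geometric identification will be enough to conclude $\Geom(\drm)=\Geom(M)$.

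I would then verify the geometric claim cell by cell. For each nonzero entry $M_{i,j}$, the corresponding $2\ttimes 2$ block of $\drm$ occupies columns $2i-1,2i$ and rows $2j-1,2j$. When $M_{i,j}=1$, the two $+1$s sit in the lower-left and upper-right cells of the block, so the two corresponding open positive segments share only the single endpoint $(2i-1,2j-1)$, and their union is the open diagonal from $(2i-2,2j-2)$ to $(2i,2j)$ with that interior meeting point removed. Scaling by $1/2$ produces $L_{i,j}$ minus its midpoint. The case $M_{i,j}=-1$ is symmetric: the two $-1$s sit in the upper-left and lower-right cells of the block, so the two corresponding negative segments join at $(2i-1,2j-1)$ to form the open anti-diagonal from $(2i-2,2j)$ to $(2i,2j-2)$ with that point missing, and scaling by $1/2$ recovers $L_{i,j}$ with its midpoint missing. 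Taking the union over all nonzero entries finishes the identification.

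There is no real obstacle here; the only delicate step is keeping the lower-left column-then-row indexing convention straight, so that the two nonzero cells of each $2\ttimes 2$ replacement block are correctly identified and their open segments fit end-to-end into a single scaled copy of $L_{i,j}$. Once this bookkeeping is in place, the observation follows immediately from the two general facts recorded in the first paragraph.
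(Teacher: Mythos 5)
Your proposal is correct and takes the same route as the paper, which justifies the observation simply by noting that $\Lambda_{\drm}$ is ``essentially a scaled copy'' of $\Lambda_M$; you have merely made the word ``essentially'' precise (the missing midpoints, and why deleting finitely many interior points of the open segments does not change the set of plottable permutations). The cell-by-cell bookkeeping with the lower-left indexing convention is carried out correctly in both the $+1$ and $-1$ cases.
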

We will demonstrate a connection between the growth rate of
$\Geom(M)$ and the \emph{matching polynomial} of the graph $G(\drm)$, the row-column graph of the double refinement of $M$.
A~$k$-\emph{matching} of a graph is a set of $k$ edges, no
pair of which have a vertex in common.
For example, the negative (dashed) edges in the graph in Figure~\ref{figDoubleRefinement} constitute a $4$-matching.
If, for each~$k$, $m_k(G)$ denotes the number of distinct $k$-matchings of a graph $G$
with $n$ vertices,
then the \emph{matching polynomial}
$\mu_G(z)$
of $G$ is defined to be
\begin{equation}\label{eqMatchingPolynomialDefn}
\mu_G(z)
\;=\; \sum_{k=0}^{\floor{n/2}} (-1)^k\+ m_k(G)\+z^{n-2k}.
\end{equation}
Observe that the exponents of the variable $z$ enumerate \emph{defects} in $k$-matchings: the number of vertices which are \emph{not} endvertices of an edge in such a matching.
If $n$ is even, $\mu_G(z)$ is an even function;
if $n$ is odd, $\mu_G(z)$ is an odd function.

With the relevant definitions complete, we can now state our theorem:

\thmbox{
\begin{thm}\label{thmGeomClassGrowthRate}
The growth rate of
geometric grid class
$\Geom(M)$
exists and
is equal to
the square
of the largest root of
the matching polynomial
$\mu_{G(\drm)}(z)$,
where
$G(\drm)$ is
the row-column graph of the double refinement of $M$.
\end{thm}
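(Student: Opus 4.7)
My plan is to encode gridded permutations as words in a trace monoid, invoke the Cartier--Foata inversion formula to obtain a rational generating function, and then identify its denominator with a polynomial equivalent to the matching polynomial of $G(\drm)$. By Observation~\ref{obsGeomDouble} it suffices to prove the theorem for $\drm$; write $G = G(\drm)$. I would introduce the trace monoid $\tmm$ over an alphabet whose letters are the edges of $G$ (equivalently, the nonzero cells of $\drm$), declaring two letters to commute precisely when the corresponding edges are vertex-disjoint, i.e.\ when their cells share neither a row nor a column. A word $e_1 \cdots e_n$ is then interpreted as a recipe for plotting a permutation on $\Lambda_{\drm}$: using the consistent orientation afforded by the double refinement (cf.\ Figure~\ref{figDoubleRefinement}), one places the $i$-th point on the segment indexed by $e_i$ in the next available position along that segment. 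Swapping two consecutive letters whose cells share neither a row nor a column does not alter the resulting permutation, so this recipe descends to a well-defined map from traces $\tmm \to \Geom(M)$.

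The technical heart of the proof, and the step I expect to be the main obstacle, is showing that this map is a bijection. Surjectivity is straightforward: any gridding of $\sigma \in \Geom(M)$ yields a word by enumerating its points in a canonical cell-by-cell order. Injectivity is more delicate---I would need to show that any two words realising the same permutation differ only by swaps of adjacent commuting letters, so that the Cartier--Foata normal form is uniquely determined by $\sigma$. The double refinement is essential here: it ensures that within each cell the orientation is unambiguous and rules out pathological collapses that can occur for $M$ itself, letting one prove by induction on length that competing words for $\sigma$ can be massaged into a common normal form.

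Granted the bijection, $g_n = |\{\,t \in \tmm : |t| = n\,\}|$, and the Cartier--Foata inversion formula yields
\begin{equation*}
\sum_{n \geq 0} g_n\, z^n \;=\; \frac{1}{\sum_{C} (-z)^{|C|}},
\end{equation*}
where the denominator sums over cliques $C$ in the commutation graph of $\tmm$. Such a clique is precisely a set of pairwise vertex-disjoint edges of $G$, i.e.\ a matching, so the denominator equals $Q(z) := \sum_{k \geq 0} (-1)^k\, m_k(G)\, z^k$. A direct computation from the definition of $\mu_G$ gives $\mu_G(z) = z^{|V(G)|}\, Q(1/z^2)$, putting the positive roots $\lambda$ of $\mu_G$ in bijection with the positive roots $1/\lambda^2$ of $Q$.

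The growth rate of $g_n$ is the reciprocal of the smallest positive singularity of this rational generating function, namely $\lambda^2$, where $\lambda$ is the largest root of $\mu_G$ (real and positive by the Heilmann--Lieb theorem applied to the bipartite graph $G$). Existence of the limit $\lim_{n \to \infty} g_n^{1/n}$ then follows from standard singularity analysis of a rational generating function with a unique dominant real positive pole, completing the proof.
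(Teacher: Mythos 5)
Your overall route---reduce to $\drm$ via Observation~\ref{obsGeomDouble}, encode via a trace monoid using the consistent orientation, apply Cartier--Foata, identify the clique polynomial of the commutation graph with the matching generating polynomial of $G(\drm)$, and pass to the reciprocal polynomial---is exactly the paper's. But there is a genuine gap at what you yourself flag as the technical heart: the map from traces to $\Geom(M)$ is \emph{not} a bijection, and the injectivity claim you propose to prove (``any two words realising the same permutation differ only by swaps of adjacent commuting letters'') is false. Two words whose letters occur with different multiplicities in some cell can realise the same permutation via \emph{different griddings}---Figure~\ref{figGeomClass} exhibits two distinct griddings of $1527634$---and no sequence of commutations can relate such words, since commutation preserves the number of occurrences of each letter. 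Consequently $g_n \neq |\{\,t\in\tmm : |t|=n\,\}|$ in general, and the induction you sketch cannot close.

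The correct statement, and the one the paper uses (Lemma~\ref{lemmaTraceMonoid}, quoting \cite{AABRV2011}), is that traces of length $n$ are in bijection with $M$-\emph{gridded} permutations, i.e.\ with $\Geomhash(M)$. To get back to $\Geom(M)$ you then need the separate observation (Lemma~\ref{lemmaGRGriddings}) that a permutation of length $n$ has at most $\binom{n+t-1}{t-1}\binom{n+u-1}{u-1}$ griddings---a polynomial bound---so that $\Geomhash(M)$ and $\Geom(M)$ have the same growth rate. With that repair your argument goes through. A smaller point: existence of $\lim g_n^{1/n}$ (rather than just $\limsup$) needs the denominator $Q$ to have a \emph{unique} root of smallest modulus; you assert a ``unique dominant real positive pole,'' but this is a nontrivial fact about trace monoids due to Goldwurm \& Santini~\cite{GS2000} (combined with Pringsheim), not a consequence of Heilmann--Lieb, which only gives realness of the roots of $\mu_G$.
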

} 

In the next section, we prove this theorem by utilizing the link between geometric grid classes and trace monoids, and their connection to rook numbers and the matching polynomial.
Then, in Section~\ref{SectGeomImplications}
we investigate a number of implications of this result by utilizing properties of the matching polynomial, especially the fact that the moments of $\mu_G(z)$
enumerate certain closed walks on $G$.
Firstly, we characterise the growth rates of geometric grid classes in terms of the spectral radii of trees.
Then, we explore the influence of cycle parity on growth rates and relate
the growth rates of geometric grid classes to those of monotone grid classes.
Finally, we consider the effect
of subdividing edges in the row-column graph, proving some new results regarding how edge subdivision affects the largest root of the matching polynomial.

\section{Proof of Theorem~\ref{thmGeomClassGrowthRate}}

In order to prove our result, we make use of the
connection between geometric grid classes and \emph{trace monoids}. This relationship was first used by Vatter \& Waton~\cite{VW2011} to establish certain structural properties of
grid classes, and was developed further in~\cite{AABRV2011} from where we use a number of results.
To begin with, we need to consider \emph{griddings} of permutations.

If $M$ has dimensions $t\ttimes u$, then an $M$-\emph{gridding} of a permutation $\sigma_1\ldots\sigma_n$ in $\Geom(M)$ 
consists of two sequences $c_1,\ldots,c_t$ and $r_1,\ldots,r_u$
such that
there is some plot $(x_1,y_1),\ldots,(x_n,y_n)$ of $\sigma$
for which
$c_i$ is the
number of points $(x_k,y_k)$ in column $i$
(with $i-1<x_k<i$),
and $r_j$ is the
number of points in row $j$
(with $j-1<y_k<j$).\footnote{This
definition
of an $M$-gridding
is equivalent to the traditional one
given
in terms
of the positions of the cell dividers
relative to the points
$(k,\sigma_k)$.}
Note that a permutation may have multiple distinct griddings in a given geometric grid class; see Figure~\ref{figGeomClass} for an example.
We call a permutation together with
one of its $M$-griddings
an
$M$-\emph{gridded permutation}.
We use $\Geomhash(M)$ to denote the set of all $M$-gridded permutations.

From an enumerative perspective,
it can be much easier working with $M$-gridded permutations than directly with the permutations themselves.
The following observation means that we can, in fact, restrict our considerations to $M$-gridded permutations:
\begin{lemma}[see Vatter~\cite{Vatter2011} Proposition~2.1]\label{lemmaGRGriddings}
If it exists, the growth rate of
$\Geom(M)$ is equal to the growth rate of the corresponding class of $M$-gridded permutations $\Geomhash(M)$.
\end{lemma}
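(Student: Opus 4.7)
The plan is to bound $g_n^{\#}$ above and below by functions of $g_n$ that differ only by a polynomial factor, so that after taking $n$-th roots the two sequences share the same growth behaviour. Here $g_n$ counts permutations of length $n$ in $\Geom(M)$, as already defined in the paper, and $g_n^{\#}$ denotes the analogous count for $\Geomhash(M)$.

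For the lower bound, forgetting the gridding yields a surjection $\Geomhash(M)\to\Geom(M)$, since by definition every permutation in $\Geom(M)$ admits at least one $M$-gridding. Hence $g_n\le g_n^{\#}$.

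For the upper bound, I count how many $M$-griddings a single length-$n$ permutation can carry. An $M$-gridding is specified by the two sequences $(c_1,\ldots,c_t)$ and $(r_1,\ldots,r_u)$, and each of these is a weak composition of $n$ (into $t$ and $u$ parts respectively, where $M$ has dimensions $t\ttimes u$). The number of candidate pairs is therefore at most $\binom{n+t-1}{t-1}\binom{n+u-1}{u-1}$, a polynomial $p(n)$ of degree $t+u-2$ in $n$, since $t$ and $u$ are fixed by $M$. Consequently $g_n^{\#}\le p(n)\,g_n$.

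Combining these, $g_n\le g_n^{\#}\le p(n)\,g_n$. Taking $n$-th roots and using $p(n)^{1/n}\to 1$ shows that the sequences $g_n^{1/n}$ and $(g_n^{\#})^{1/n}$ have the same $\limsup$ and the same $\liminf$; in particular, whichever growth rate exists as a genuine limit, so does the other, and the two limits agree. There is no real obstacle here — the whole argument is driven by the elementary observation that, for $M$ of fixed dimensions, the number of $M$-griddings of any given permutation grows only polynomially in its length.
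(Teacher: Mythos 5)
Your argument is correct and is essentially identical to the paper's proof: both bound the number of $M$-griddings of a length-$n$ permutation by $\binom{n+t-1}{t-1}\binom{n+u-1}{u-1}$, observe this is polynomial in $n$, and conclude by taking $n$-th roots. You simply spell out the final limit step a little more explicitly than the paper does.
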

\begin{proof}
Suppose that $M$ has dimensions $t\ttimes u$.
Each permutation in $\Geom(M)$ has at least one gridding in $\Geomhash(M)$, but no permutation of length $n$ in $\Geom(M)$ can have more than $\binom{n+t-1}{t-1}\binom{n+u-1}{u-1}$ griddings in $\Geomhash(M)$ because
that is the number of ways of choosing the number of points in each column and row.
Thus the number of $M$-gridded permutations of length $n$ is no more than a polynomial multiple of the number of $n$-permutations in $\Geom(M)$; the result follows immediately from the definition of the growth rate.
\end{proof}
To determine the growth rate of
$\Geomhash(M)$,
we will relate $M$-gridded permutations to words in a trace monoid.
To achieve this, one additional concept is required, that of a \emph{consistent orientation} of a standard figure.
If $\Lambda_M=\bigcup\,\{L_{i,j}:M_{i,j}\neq0\}$ is the standard figure of a \opmo{} matrix $M$, then a \emph{consistent orientation} of $\Lambda_M$ consists of an orientation of each oblique line $L_{i,j}$ such that in each column either all the lines are oriented leftwards or all are oriented rightwards, and in each row either all the lines are oriented downwards or all are oriented upwards.\footnote{For
ease
of exposition, we
use
the concept of a consistent orientation
rather than the
approach used previously involving
partial multiplication matrices; results from \cite{AABRV2011} follow \emph{mutatis mutandis}.}
See Figures~\ref{figDoubleRefinement} and \ref{figPermWord} for examples.

It is not always possible to consistently orient a standard figure.
The ability to do so depends on the \emph{cycles} in the row-column graph.
We say that the \emph{parity} of a cycle in $G(M)$ is
the product of the labels of its edges,
a \emph{positive cycle} is one which has parity $+1$, and
a \emph{negative cycle} is one with parity $-1$.
The following result relates
cycle parity to
consistent orientations:
\begin{lemma}[see Vatter \& Waton~\cite{VW2011} Proposition~2.1]\label{lemmaNegativeCycles}
The standard figure
$\Lambda_M$
has a consistent orientation if and
only if its row-column graph $G(M)$ contains no negative cycles.
\end{lemma}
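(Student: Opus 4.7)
The plan is to translate ``consistent orientation'' into a sign assignment on the vertices of $G(M)$ and then recognise the resulting condition as the classical criterion for a signed bipartite graph to be ``balanced'' (equivalently, 2-colourable).

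First, observe that a consistent orientation is determined by two sign vectors: a sign $s_i \in \{\pm 1\}$ for each column $c_i$ (say $+1$ means the lines in column $i$ are oriented rightwards) and a sign $t_j \in \{\pm 1\}$ for each row $r_j$ (say $+1$ means upwards). For each nonzero cell, I would check that the orientation of $L_{i,j}$ induced by $s_i$ must agree with that induced by $t_j$, and that this compatibility amounts to the single equation
\[
s_i\, t_j \;=\; M_{i,j}.
\]
Indeed, if $M_{i,j}=+1$ then rightward-in-column and upward-in-row both correspond to traversing $L_{i,j}$ from $(i-1,j-1)$ to $(i,j)$, so we need $s_i=t_j$; if $M_{i,j}=-1$ then rightward-in-column forces downward-in-row, so $s_i=-t_j$. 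Thus a consistent orientation of $\Lambda_M$ is exactly an assignment $x:V(G(M))\to\{\pm 1\}$ with $x_u x_v = \ell(uv)$ for every edge $uv$, where $\ell$ denotes the edge label.

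For the forward direction, given such an $x$ and any cycle $v_0 v_1 \ldots v_{2k} = v_0$ in $G(M)$ (of even length, since $G(M)$ is bipartite), the parity of the cycle is
\[
\prod_{i=0}^{2k-1} \ell(v_i v_{i+1}) \;=\; \prod_{i=0}^{2k-1} x_{v_i} x_{v_{i+1}} \;=\; \prod_{v \in V(\text{cycle})} x_v^{2} \;=\; +1,
\]
so every cycle of $G(M)$ is positive. For the converse, I would construct $x$ component by component: in each connected component of $G(M)$, pick a root $v_0$, set $x_{v_0}=+1$, and for every other vertex $v$ choose any path from $v_0$ to $v$ and define $x_v$ to be the product of the edge labels along this path. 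The main thing to verify is that $x_v$ is well-defined: any two $v_0$-to-$v$ paths differ by a closed walk, which decomposes into cycles (plus repeated edges that contribute $\ell^2=+1$), and the hypothesis that every cycle has parity $+1$ makes the two path-products equal. It then remains to check that $x_u x_v = \ell(uv)$ for every edge $uv$: extending the path from $v_0$ to $u$ by the edge $uv$ gives a path to $v$, so $x_v = x_u \,\ell(uv)$, hence $x_u x_v = x_u^2\, \ell(uv) = \ell(uv)$.

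The only subtle step is the well-definedness of $x_v$ in the converse, since one must argue that a closed walk (not just a simple cycle) has product $+1$; this follows because any closed walk in a graph can be decomposed into simple cycles plus back-and-forth traversals of edges, each contributing $+1$ to the product. Everything else is a direct unpacking of the definitions.
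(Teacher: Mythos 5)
Your proof is correct. The paper does not actually prove this lemma itself --- it simply cites Vatter \& Waton --- and your argument is the standard one underlying that citation: encoding a consistent orientation as a vertex sign assignment $x$ with $x_u x_v = \ell(uv)$ on every edge, and then recognising this as the balance criterion for signed graphs (forward direction by squaring the vertex signs around a cycle, converse by propagating signs from a root). The only step worth tightening is well-definedness in the converse: rather than arguing that an arbitrary closed walk decomposes into simple cycles and backtracks, it is cleaner to define $x$ along a spanning tree of each component and then verify $x_u x_v = \ell(uv)$ for each non-tree edge $uv$ using the unique cycle that edge closes, which uses the positive-cycle hypothesis directly and avoids the decomposition argument altogether.
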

For example,
$\begin{rowcolmx}1&\pos0&-1\\ 1&-1&\pos1 \end{rowcolmx}$
contains a negative cycle so its standard figure
has no
consistent orientation (see Figure~\ref{figGeomClass}), whereas
$\begin{rowcolmx}-1&\pos0&-1\\ \pos1&-1&\pos1 \end{rowcolmx}$
has no negative cycles so its standard figure has a consistent orientation
(see Figure~\ref{figPermWord}).

On the other hand, we can always consistently orient the standard figure of the double refinement of a matrix
by orienting each oblique line towards the centre of its $2\times2$ block (as in Figure~\ref{figDoubleRefinement}).
So
we have the following:
\begin{lemma}[see \cite{AABRV2011} Proposition~4.1]\label{lemmaDoubleRefinementConsistentOrientation}
If $M$ is any \opmo{} matrix, then $\Lambda_{\drm}$ has a consistent orientation.
\end{lemma}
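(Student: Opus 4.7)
The plan is to exhibit an explicit consistent orientation, namely the one already suggested by Figure~\ref{figDoubleRefinement}: orient each oblique segment of $\Lambda_{\drm}$ towards the centre of the $2\ttimes 2$ block of $\drm$ that contains it. Once this rule is specified, the task reduces to verifying, column-by-column and row-by-row, that the induced orientations agree.

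First I would describe the local picture in a single $2\ttimes 2$ block. By the definition of double refinement, every nonzero entry of $M$ gives rise to a block containing exactly two nonzero entries that meet at the centre of the block: a $+1$ entry of $M$ becomes $\begin{smallmx}0&1\\1&0\end{smallmx}$, so the two positive-slope segments of $\Lambda_{\drm}$ occupy the main diagonal of the block; while a $-1$ entry becomes $\begin{smallmx}-1&\pos0\\ \pos0&-1\end{smallmx}$, so the two negative-slope segments occupy the antidiagonal. In either case the two segments share the central point of the block, so the ``towards the centre'' rule is unambiguous and orients each segment.

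A short case analysis then establishes the key local property: in every block the segment occupying the left half is directed rightwards and the one in the right half leftwards; independently, the segment in the lower half is directed upwards and the one in the upper half downwards. Since columns $2i{-}1$ and $2i$ of $\drm$ intersect only the blocks arising from column $i$ of $M$, every segment lying in column $2i{-}1$ is the left-half segment of some block, hence oriented rightwards, and every segment in column $2i$ is oriented leftwards. The analogous statement for rows is identical, and this is exactly the consistency condition.

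I expect no substantive obstacle: the lemma is a structural observation whose only subtlety is keeping the labelling conventions straight (indexing from the lower left, the slope-to-endpoints correspondence given in the definition of the standard figure, and reading $\begin{smallmx}0&1\\1&0\end{smallmx}$ and $\begin{smallmx}-1&\pos0\\ \pos0&-1\end{smallmx}$ with row~$1$ at the bottom), so that the four local sub-cases line up correctly to yield the uniform ``rightwards in odd columns, leftwards in even columns; upwards in odd rows, downwards in even rows'' pattern on which the consistency check rests.
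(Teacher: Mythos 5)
Your proposal is correct and coincides with the paper's own argument: the paper also proves the lemma by orienting every oblique segment of $\Lambda_{\drm}$ towards the centre of its $2\ttimes2$ block and noting that this is consistent (illustrated in Figure~\ref{figDoubleRefinement}). You have simply spelled out the per-column and per-row verification that the paper leaves implicit, and your case analysis is accurate given the lower-left Cartesian indexing convention.
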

Thus, by Lemma~\ref{lemmaNegativeCycles}, the row-column graph of the double refinement of a matrix never contains a negative cycle.
Figure~\ref{figDoubleRefinement} shows a consistent orientation of the standard figure of the double refinement of a matrix whose standard figure (shown in Figure~\ref{figGeomClass}) doesn't itself have a consistent orientation.

\begin{figure}[ht]
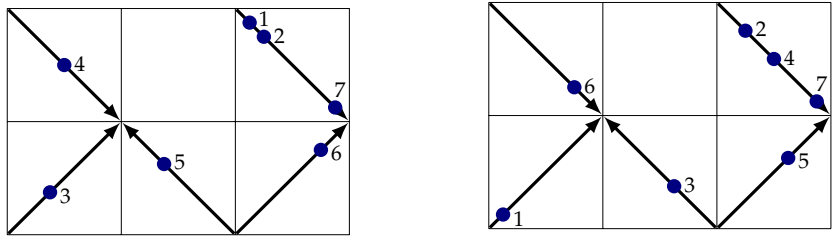

$$
{
\setgcscale{1.5}
\setgcgap{0.01}
\setgcptgridscale{8}
\setgcptsize{0.063}
\setgcptcolor{blue!50!black}
\setgcarrowmode
\setgcextra{
\node[right]at(2.115,1.865){${}^1$};
\node[right]at(2.25,1.71){${}^2$};
\node[right]at(0.375,0.345){${}_3$};
\node[right]at(0.5,1.47){${}^4$};
\node[right]at(1.375,0.595){${}^5$};
\node[right]at(2.75,0.72){${}_6$};
\node[above]at(2.925,1.125){${}_7$};
}
\gctwo[-1, -1,-1,3,12,-1,-1,-1, -1, -1,-1,5,-1,-1,-1,-1, -1, 15,14,-1,-1,-1,6,9]{3}{-2,0,-2}{2,-3,2}
\qquad\qquad
\setgcextra{
\node[right]at(0.125,0.095){${}_1$};
\node[right]at(2.25,1.72){${}^2$};
\node[right]at(1.625,0.345){${}^3$};
\node[right]at(2.5,1.47){${}^4$};
\node[right]at(2.625,0.595){${}_5$};
\node[right]at(0.75,1.22){${}^6$};
\node[above]at(2.925,1.125){${}_7$};
}
\gctwo[-1, 1,-1,-1,-1,-1,10,-1, -1, -1,-1,-1,-1,3,-1,-1, -1, -1,14,-1,12,5,-1,9]{3}{-2,0,-2}{2,-3,2}
}
$$
\caption[figPermWord]{The plots of permutation $1527634$
in $\begin{geommx}-1&\pos0&-1\\ \pos1&-1&\pos1 \end{geommx}$
associated with the
words $a_{32}a_{32}a_{11}a_{12}a_{21}a_{31}a_{32}$ and $a_{11}a_{32}a_{21}a_{32}a_{31}a_{12}a_{32}$.
Both plots correspond to the same gridding.
}\label{figPermWord}
\end{figure}
We are now in a position to describe the association between words and $M$-gridded permutations.
If $M$ is a \opmo{} matrix, then
we
let $\Sigma_M=\{a_{ij}:M_{i,j}\neq0\}$ be an alphabet of symbols, one for each nonzero cell in $M$.
If
we have a consistent orientation for $\Lambda_M$,
then
we can associate to each finite word $w_1\ldots w_n$ over $\Sigma_M$
a specific plot of
a permutation
in $\Geom(M)$
as follows: If $w_k=a_{ij}$, include the point at distance
$k\+\sqrt{2}/(n+1)$
along line segment 
$L_{i,j}$
according to its orientation.
See Figure~\ref{figPermWord} for two examples.
Clearly, this induces a mapping 
from
the set of all finite words over $\Sigma_M$ to $\Geomhash(M)$.
In fact, it can readily be shown that this map is surjective, every $M$-gridded permutation corresponding to some word over $\Sigma_M$ (\cite{AABRV2011} Proposition~5.3).

As can be seen in Figure~\ref{figPermWord}, distinct words may be mapped to the same gridded permutation.
This occurs because the order in which two consecutive points are included is immaterial if they occur in cells that
are neither in the same column nor in the same row.
From the perspective of the words, adjacent symbols corresponding to such cells may be interchanged without changing the
gridded permutation.
This corresponds to a structure known as a 
{trace monoid}. 

If we have a consistent orientation for standard figure $\Lambda_M$,
then
we define the \emph{trace monoid of~$M$},
which we denote by $\tmm$,
to be the set of equivalence classes of words over $\Sigma_M$ in which $a_{ij}$ and $a_{k\ell}$ commute (i.e.~$a_{ij}a_{k\ell}=a_{k\ell}a_{ij}$) whenever $i\neq k$ and $j\neq \ell$.
It is then relatively straightforward to show equivalence between gridded permutations and elements of the trace monoid:
\begin{lemma}[see \cite{AABRV2011} Proposition~7.1]\label{lemmaTraceMonoid}
If the standard figure $\Lambda_M$ has a consistent orientation, then gridded $n$-permutations in $\Geomhash(M)$ are in bijection with equivalence classes of words of length $n$ in
$\tmm$.
\end{lemma}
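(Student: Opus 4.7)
The plan is to show that the map from length-$n$ words over $\Sigma_M$ to $\Geomhash(M)$ described in the preceding paragraphs descends to a bijection on equivalence classes of words in $\tmm$. Surjectivity is already noted (by \cite{AABRV2011} Proposition~5.3), so the remaining content is to verify (i)~that trace-equivalent words yield the same gridded permutation, and (ii)~that any two words yielding the same gridded permutation must be trace-equivalent. Since the generators of trace equivalence swap adjacent letters, they preserve word length, so length-$n$ trace classes do correspond to length-$n$ plots.

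For (i) it suffices to check a single generating move. Replace $w_s w_{s+1}=a_{ij}a_{i'j'}$ in a word $w$ by $a_{i'j'}a_{ij}$, with $i\neq i'$ and $j\neq j'$, to obtain $w'$. The letter multiset is unchanged, so the gridding is unchanged. For the underlying permutation, the crucial ingredient is consistent orientation: within any single column, all oblique segments are oriented the same way, so by the formula placing the letter at position $p$ at distance $p\sqrt{2}/(n+1)$ along its segment, the $x$-coordinate of any plotted point in column~$k$ is a strictly monotone function of the word position of its letter; likewise the $y$-coordinate of a point is a strictly monotone function of its word position within its row. The transposition moves only the single column-$i$ letter at position $s$ and the single column-$i'$ letter at position $s+1$ (and analogously for rows $j$ and $j'$), so the relative order of the letters of any fixed column is unchanged, as is that within any fixed row. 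Consequently $w$ and $w'$ induce the same $x$-rankings within each column and the same $y$-rankings within each row, hence the same gridded permutation.

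For (ii) I would reverse the argument. Starting from any $M$-gridded permutation, the multiplicity of each letter $a_{ij}$ in any preimage word equals the number of plotted points in cell $(i,j)$; moreover, reading the plotted points of column~$i$ in $x$-order recovers the projection of any preimage word onto the column-$i$ alphabet (by the same monotonicity), and analogously each row determines the projection onto the row-$j$ alphabet. Since two letters $a_{ij}$, $a_{i'j'}$ fail to commute in $\tmm$ precisely when $i=i'$ or $j=j'$, these column and row projections together record the relative order of every pair of non-commuting letters. By the classical (Mazurkiewicz) characterization of trace equivalence, any two preimage words represent the same element of $\tmm$.

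The only substantive obstacle is the monotonicity statement used in both directions; this is exactly where the consistent-orientation hypothesis is used, and it is what permits the geometric data (the gridded permutation) to be matched against the combinatorial data (the trace). Everything else is bookkeeping plus the standard projection fact about trace monoids.
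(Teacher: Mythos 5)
Your argument is correct. The paper itself does not prove this lemma but defers entirely to \cite{AABRV2011} (Proposition~7.1), and your proof is essentially the standard one that reference gives: the consistent orientation makes the $x$-order within each column and the $y$-order within each row coincide (up to a fixed reversal) with word-position order, which yields both well-definedness on trace classes and, via the column/row projections, injectivity. The one imported ingredient, the projection (Mazurkiewicz/Cori--Perrin) characterisation of trace equivalence, is applied correctly, since every dependent pair of letters shares a column or a row and is therefore captured by one of the projections you recover from the gridded permutation.
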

\vspace{-4pt}
Hence, by combining Lemmas~\ref{lemmaGRGriddings}, \ref{lemmaDoubleRefinementConsistentOrientation} and \ref{lemmaTraceMonoid} with Observation~\ref{obsGeomDouble}, we know that the growth rate of $\Geom(M)$ is equal to the growth rate of $\mathbb{M}(\drm)$ if it exists.
All that remains 
is to determine the growth rate of the trace monoid of a matrix.


Trace monoids
were first studied by Cartier \& Foata~\cite{CF1969}.
Using extended M\"obius inversion, they
determined the general form of the generating function, as follows:
\begin{lemma}[\cite{CF1969};
see also Flajolet \& Sedgewick~\cite{FS2009} Note~V.10] 
The ordinary generating function for $\tmm$ is given by
$$
f_M(z) \;=\;
\frac{1}{
\sum_{k\geqslant0} (-1)^k \+ r_k(M) \+ z^k
}
$$
where $r_k(M)$ is the number of $k$-subsets of $\Sigma_M$ whose elements commute pairwise.
\end{lemma}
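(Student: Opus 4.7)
The plan is to prove the equivalent identity $f_M(z) \cdot P_M(z) = 1$, where $P_M(z) = \sum_{k\geq 0} (-1)^k\, r_k(M)\, z^k$ is the denominator in the claimed formula. Multiplying the two series out, the coefficient of $z^n$ on the left becomes
$$
\sum_{(t,\,S)\,:\,|t| + |S| = n} (-1)^{|S|},
$$
where $t$ ranges over $\tmm$ and $S$ over pairwise-commuting subsets of $\Sigma_M$. The claim then reduces to showing that this sum equals $1$ when $n = 0$ and vanishes when $n \geq 1$.

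The case $n = 0$ is immediate, as the only admissible pair is $(\varepsilon, \emptyset)$. For $n \geq 1$ I would exhibit a length-preserving, sign-reversing involution on such pairs, in the classical Cartier--Foata style. Fix a linear order on $\Sigma_M$; for each trace $t$, let $I(t)$ denote its set of \emph{initial letters}, namely those $a \in \Sigma_M$ for which $t$ admits a representative word beginning with $a$. The key structural fact, immediate from the commutation relations defining $\tmm$, is that $I(t)$ is itself a commuting subset of $\Sigma_M$. The involution $\phi(t,S)$ then selects a canonical letter $a$ --- essentially the smallest element of $S \cup I(t)$, subject to a compatibility condition with the elements of $S$ smaller than it --- and toggles whether $a$ sits in $S$ or at the front of $t$: if $a \in I(t)$, write $t = a \cdot t'$ and set $\phi(t, S) = (t', S \cup \{a\})$; if instead $a \in S$, set $\phi(t, S) = (a \cdot t, S \setminus \{a\})$. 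Since $|S|$ changes by exactly one, $\phi$ is sign-reversing and the contributions to the inner sum cancel in pairs.

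The main obstacle is pinning down the selection rule so that $\phi$ is genuinely an involution with valid outputs --- that is, so $S \cup \{a\}$ remains pairwise commuting when we pull $a$ off $t$, and the canonical letter chosen after applying $\phi$ is the same one, guaranteeing $\phi \circ \phi = \mathrm{id}$. This is where careful use of both the clique property of $I(t)$ and the commuting property of $S$ is needed, together with a tie-breaking convention handling the case $a \in I(t) \cap S$. Once the involution is verified, the identity $f_M(z) \cdot P_M(z) = 1$ follows, yielding the stated closed form for $f_M(z)$.
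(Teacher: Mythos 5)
The paper does not prove this lemma---it cites Cartier--Foata and Flajolet--Sedgewick, remarking only that the original argument proceeds by ``extended M\"obius inversion'' (i.e.\ computing the M\"obius function of the free partially commutative monoid and inverting). Your sign-reversing involution is the standard \emph{bijective} proof of the same identity, so there is nothing wrong with the overall route; it is in fact the more combinatorial of the two classical approaches, and you correctly identify the key facts (that $I(t)$ is a commuting set, that the problem reduces to cancelling pairs $(t,S)$ with $|t|+|S|=n\geqslant1$, and that the whole game is in the choice rule).

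However, the selection rule as you have sketched it does not give a valid involution. You propose to take (roughly) the smallest $a\in S\cup I(t)$ ``subject to a compatibility condition with the elements of $S$ smaller than it.'' Observe that any $s\in S$ trivially satisfies any condition phrased only in terms of the elements of $S$ below it, so if the chosen $a$ lies in $I(t)\setminus S$, then nothing in $S$ is smaller than $a$ and your side-condition is vacuous: the rule collapses to $a=\min\bigl(S\cup I(t)\bigr)$. But this $a$ may fail to commute with some $s\in S$ with $s>a$, in which case $S\cup\{a\}$ is not a commuting set and the map is not even well-defined. The condition you actually need is that $a$ commute with \emph{all} of $S$ (not just the smaller elements). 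Concretely: set
$$
a \;=\; \min\bigl\{\,b\in S\cup I(t)\;:\; b \text{ commutes with every element of } S\setminus\{b\}\,\bigr\},
$$
a set that is nonempty since $\min S$ always qualifies (and $I(t)\neq\emptyset$ when $S=\emptyset$). With this rule, toggling $a$ between $S$ and the front of $t$ always yields a valid pair, and one checks directly---using that $I(t)$ is a clique and that a letter commuting with $a$ is initial in $a\cdot t'$ iff it is initial in $t'$---that the same $a$ is selected after applying $\phi$, so $\phi\circ\phi=\mathrm{id}$. With that repair your argument is complete and correct.
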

Since symbols in $\tmm$ commute if and only if they correspond to cells that are neither in the same column nor in the same row,
it is easy to see that
$r_k(M)$ is the number of distinct ways of placing $k$ chess rooks on the nonzero entries of $M$ in such a way that no two rooks attack each other by being in the same column or row.
The numbers $r_k(M)$ are known as the \emph{rook numbers} for $M$ (see Riordan~\cite{Riordan2002}).
Moreover, a matching
in the row-column graph $G(M)$ also corresponds to a set of cells 
no pair of which share a column or row.
So
the rook numbers for $M$ are
the same as
the numbers of matchings in $G(M)$: 
\begin{obs}\label{obsRookNumIsMatchingNum}
For all $k\geqslant0$, $r_k(M)=m_k(G(M))$.
\end{obs}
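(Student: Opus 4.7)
The plan is to establish the equality by exhibiting an explicit bijection between the sets enumerated by the two sides.

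First, I would unwind the definition of $r_k(M)$. Recall that $\Sigma_M = \{a_{ij} : M_{i,j} \neq 0\}$, and, by the definition of $\tmm$ given just before the lemma of Cartier--Foata, two generators $a_{ij}$ and $a_{k\ell}$ commute if and only if $i \neq k$ and $j \neq \ell$. Thus $r_k(M)$ counts $k$-element subsets of the nonzero cells of $M$ with the property that no two of the chosen cells lie in a common column or in a common row.

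Second, I would unwind the definition of $m_k(G(M))$. The edges of $G(M)$ are in obvious one-to-one correspondence with the nonzero cells of $M$, namely $r_i c_j \leftrightarrow (i,j)$ whenever $M_{i,j} \neq 0$. Two distinct edges of the bipartite graph $G(M)$ share a vertex if and only if they share an $r$-endpoint (same row index) or a $c$-endpoint (same column index). Hence a $k$-matching of $G(M)$ is precisely a choice of $k$ nonzero cells of $M$ with no two in a common row and no two in a common column.

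The bijection is now immediate: send the subset $\{a_{i_1 j_1}, \ldots, a_{i_k j_k}\} \subseteq \Sigma_M$ to the edge set $\{r_{i_1} c_{j_1}, \ldots, r_{i_k} c_{j_k}\}$ of $G(M)$. Under this map, the pairwise-commutation condition $i_s \neq i_t$ and $j_s \neq j_t$ for $s \neq t$ translates verbatim into the condition that no two of the chosen edges share an endpoint.

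There is no substantive obstacle here; the observation is effectively a restatement of the trace-monoid commutation rule in the language of the bipartite graph $G(M)$. The only point requiring a moment's attention is that in a bipartite graph with parts $\{r_i\}$ and $\{c_j\}$ the two possible ways for two edges to share a vertex correspond neatly to the two separate conditions $i = k$ and $j = \ell$, so that no case analysis beyond this observation is needed.
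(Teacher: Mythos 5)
Your proposal is correct and follows essentially the same route as the paper: both unwind the commutation rule of $\tmm$ to identify $r_k(M)$ with non-attacking rook placements on the nonzero cells, and then use the edge--cell correspondence $r_ic_j\leftrightarrow(i,j)$ to identify these with $k$-matchings of $G(M)$. The paper treats this as immediate and gives no further proof, so your explicit bijection is simply a more detailed writing-out of the same observation.
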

Now, by elementary analytic combinatorics, we know that the growth rate of $\tmm$ is given by the reciprocal of the root
of the denominator 
of $f_M(z)$
that has least magnitude (see~\cite{FS2009} Theorem~IV.7).
The fact that this polynomial has a \emph{unique} root of smallest modulus
was proved by Goldwurm \& Santini in~\cite{GS2000}. It is real and positive by Pringsheim's Theorem. 

But the \emph{reciprocal} of the \emph{smallest} root of a polynomial is the same as the \emph{largest} root of the \emph{reciprocal} polynomial (obtained by reversing the order of the coefficients). Hence,
if $M$ has dimensions $t\ttimes u$
and $n=t+u$,
then the growth rate of $\tmm$ is the largest (positive real) root of the polynomial
\begin{equation}\label{eqGMDefn}
g_M(z)
\;=\;
\frac{1}{
z^{\floor{n/2}} \+ f_M \!\left(\frac{1}{z}\right)\!
}
\;=\;
\sum_{k=0}^{\floor{n/2}} (-1)^k \+ r_k(M) \+ z^{\floor{n/2}-k}.
\end{equation}
Here,
$g_M(z)$ is the reciprocal polynomial of
$\left(f_M(z)\right)^{-1}$
multiplied by some nonnegative power of $z$,
since $r_k(M)=0$ for all $k>\floor{n/2}$.
Note also that $n$ is the number of vertices in $G(M)$.

If we now compare the definition of $g_M(z)$ in~\eqref{eqGMDefn}
with that of the matching polynomial $\mu_G(z)$ in~\eqref{eqMatchingPolynomialDefn}
and use Observation~\ref{obsRookNumIsMatchingNum}, then we see that:
$$
g_M(z^2) \;=\;
\begin{cases}
\phantom{z^{-1}\+}\mu_{G(M)}(z), & \text{if $n$ is even;} \\[3pt]
         z^{-1}\+ \mu_{G(M)}(z), & \text{if $n$ is odd.}
\end{cases}
$$
Hence,
the largest root of $g_M(z)$ is the square of the largest root of $\mu_{G(M)}(z)$.

We now have
all we need to prove Theorem~\ref{thmGeomClassGrowthRate}: The growth rate of $\Geom(M)$ is equal to the growth rate of $\mathbb{M}(\drm)$ which equals the square of the largest root of $\mu_{G(\drm)}(z)$.

In the above argument, we only employ the double
refinement $\drm$ to ensure that a consistent orientation is possible.
By
Lemma~\ref{lemmaNegativeCycles}, we know that if
$G(M)$
is free of negative cycles then $\Lambda_M$
can be consistently oriented.
Thus, we have the following special case of Theorem~\ref{thmGeomClassGrowthRate}:

\thmbox{
\begin{cor}\label{corNoNegCycles}
If
$G(M)$
contains no negative cycles,
then
the growth rate of
$\Geom(M)$ is
equal to
the square
of the largest root of
$\mu_{G(M)}(z)$.
\end{cor}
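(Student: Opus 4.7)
The plan is to recycle the proof of Theorem~\ref{thmGeomClassGrowthRate} almost verbatim, simply replacing the double refinement $\drm$ by $M$ itself wherever the argument invokes the existence of a consistent orientation. The only role of the double refinement in the proof of Theorem~\ref{thmGeomClassGrowthRate} was to guarantee, via Lemma~\ref{lemmaDoubleRefinementConsistentOrientation}, that $\Lambda_{\drm}$ could be consistently oriented so that Lemma~\ref{lemmaTraceMonoid} was applicable. Under the hypothesis of the corollary, Lemma~\ref{lemmaNegativeCycles} supplies exactly this orientation for $\Lambda_M$ directly, so we never need to pass to $\drm$.

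More concretely, first I would appeal to Lemma~\ref{lemmaNegativeCycles} to assert that $\Lambda_M$ admits a consistent orientation. Then Lemma~\ref{lemmaTraceMonoid} gives a length-preserving bijection between gridded $n$-permutations in $\Geomhash(M)$ and equivalence classes in the trace monoid $\tmm$. Combining this with Lemma~\ref{lemmaGRGriddings} yields
\[
\gr(\Geom(M)) \;=\; \gr(\Geomhash(M)) \;=\; \gr(\tmm),
\]
provided the last quantity exists.

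Next, I would apply the Cartier--Foata generating function for $\tmm$ together with Observation~\ref{obsRookNumIsMatchingNum} (identifying the rook numbers $r_k(M)$ with the matching numbers $m_k(G(M))$), and the Goldwurm--Santini uniqueness of the smallest-modulus root, exactly as in the proof of the theorem. This shows that $\gr(\tmm)$ exists and equals the largest positive real root of the reciprocal polynomial $g_M(z)$ defined in~\eqref{eqGMDefn}. Finally, the identity
\[
g_M(z^2) \;=\; \begin{cases} \mu_{G(M)}(z), & \text{if $|V(G(M))|$ is even,} \\ z^{-1}\,\mu_{G(M)}(z), & \text{if $|V(G(M))|$ is odd,} \end{cases}
\]
established in the proof of the theorem, implies that the largest root of $g_M(z)$ is the square of the largest root of $\mu_{G(M)}(z)$, completing the argument.

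There is essentially no obstacle here beyond bookkeeping: the whole chain of lemmas from Section~2 was already formulated for an arbitrary \opmo{} matrix whose standard figure admits a consistent orientation, and the hypothesis ``$G(M)$ has no negative cycles'' is precisely what Lemma~\ref{lemmaNegativeCycles} needs. The only thing to check is that no step in the argument silently used a property special to $\drm$ (such as bipartite structure of the row-column graph, which is in fact automatic for any \opmo{} matrix, or evenness of $|V(G)|$, which is handled by the case split above). Since none did, the corollary follows.
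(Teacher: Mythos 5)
Your proposal is correct and follows exactly the same route the paper takes: the paper explicitly observes that the double refinement was used only to guarantee a consistent orientation, and that Lemma~\ref{lemmaNegativeCycles} supplies such an orientation directly for $\Lambda_M$ under the no-negative-cycles hypothesis, after which the chain of Lemmas~\ref{lemmaGRGriddings}, \ref{lemmaTraceMonoid}, the Cartier--Foata generating function, Observation~\ref{obsRookNumIsMatchingNum}, and the $g_M(z^2)$ identity goes through unchanged with $M$ in place of $\drm$. Your attention to the even/odd case split and to the fact that no step secretly depended on properties special to $\drm$ is a reasonable sanity check, but it introduces nothing beyond the paper's argument.
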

} 

\newpage 
\section{Consequences}
\label{SectGeomImplications}
In this final section, we investigate some of the implications of Theorem~\ref{thmGeomClassGrowthRate}.
By considering properties of the matching polynomial, we characterise the growth rates of geometric grid classes in terms of the spectral radii of trees, prove a monotonicity result, and explore the influence of cycle parity on growth rates.
We then compare the growth rates of geometric grid classes with those of monotone grid classes.
Finally, we consider the effect
of subdividing edges in the row-column graph.

\newcommand{\deledge}{\!\setminus\!}
Let's begin by introducing some notation.
We denote the graph composed of two disjoint subgraphs $G$ and $H$ by $G+H$.
The graph resulting from deleting
from a graph $G$
the vertex $v$
(and all edges incident to $v$)
is denoted $G-v$.
Generalising this,
if $H$ is a subgraph of~$G$, then $G-H$ is the graph obtained by deleting the vertices of $H$ from $G$.
In contrast, we use
$G\deledge e$ to denote
the graph resulting from deleting the edge $e$ from 
$G$.
The number of connected components of $G$ is represented by $\mathrm{comp}(G)$.
The characteristic polynomial of a graph $G$ is denoted $\Phi_G(z)$. We use $\rho(G)$ to denote the spectral radius of $G$, the largest root of $\Phi_G(z)$.
Finally, we use $\lambda(G)$ for the largest root of the matching polynomial $\mu_G(z)$.

The matching polynomial was independently discovered a number of times, beginning with
Heilmann \& Lieb~\cite{HL1970} when investigating monomer-dimer systems in statistical physics.
It was first studied from a combinatorial perspective by Farrell~\cite{Farrell1979} and Gutman~\cite{Gutman1977}.
The theory was then further developed by Godsil \& Gutman~\cite{GG1981} and Godsil~\cite{Godsil1981}.
An introduction can be found in the books by Godsil~\cite{Godsil1993} and Lov\'asz \& Plummer~\cite{LP2009}.

The facts concerning the matching polynomial that we will use are covered by three lemmas.
As a consequence of the first, we only need to consider connected graphs:
\begin{lemma}[Farrell~\cite{Farrell1979}, Gutman~\cite{Gutman1977}]
\label{lemmaMuProductComponents}
The matching polynomial of a graph is the product of the matching polynomials of its connected components.
\end{lemma}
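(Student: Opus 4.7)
The plan is to reduce to the two-component case by induction on the number of components, then prove that case directly from the definition of $\mu_G$ in equation~\eqref{eqMatchingPolynomialDefn} via a straightforward combinatorial decomposition of matchings.

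Suppose $G = G_1 + G_2$ is the disjoint union of two graphs, with $|V(G_i)| = n_i$ and $n = n_1 + n_2$. The key combinatorial observation is that, since no edge of $G$ has endpoints in both $G_1$ and $G_2$, every $k$-matching $M$ of $G$ splits uniquely as $M = M_1 \cup M_2$ where $M_i$ is a $k_i$-matching of $G_i$ and $k_1 + k_2 = k$. Conversely, any such pair yields a $k$-matching of $G$. This gives the convolution identity
\[
m_k(G) \;=\; \sum_{k_1+k_2=k} m_{k_1}(G_1)\, m_{k_2}(G_2).
\]

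Substituting this into the definition~\eqref{eqMatchingPolynomialDefn} and using that $n - 2k = (n_1 - 2k_1) + (n_2 - 2k_2)$ and $(-1)^k = (-1)^{k_1}(-1)^{k_2}$, the sum factors as
\[
\mu_G(z) \;=\; \sum_{k_1, k_2} (-1)^{k_1} m_{k_1}(G_1)\, z^{n_1 - 2k_1} \cdot (-1)^{k_2} m_{k_2}(G_2)\, z^{n_2 - 2k_2} \;=\; \mu_{G_1}(z)\,\mu_{G_2}(z),
\]
where the ranges $0 \le k_i \le \lfloor n_i/2 \rfloor$ are enforced automatically because $m_{k_i}(G_i) = 0$ outside them. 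An immediate induction on the number of components then yields the general statement.

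There is no real obstacle; the only thing to be careful of is bookkeeping on the exponents of $z$ and the tacit extension of summation ranges, both of which are handled by observing that $m_{k_i}(G_i)$ vanishes for $k_i > \lfloor n_i/2 \rfloor$ so that the convolution may be taken over all nonnegative pairs $(k_1, k_2)$ with $k_1 + k_2 = k$ without altering the sum.
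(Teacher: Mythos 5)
Your proof is correct and complete: the convolution identity for $m_k$ over a disjoint union follows immediately from the absence of cross-component edges, and the exponent and sign bookkeeping factors exactly as you say. The paper itself states this lemma by citing Farrell and Gutman and gives no proof, so there is nothing to compare against; your argument is the standard one and is sound.
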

Thus, in particular:
\begin{cor}\label{corLambdaMaxComponents}
  $\lambda(G+H)=\max(\lambda(G),\lambda(H))$.
\end{cor}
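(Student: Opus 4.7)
The proof is essentially immediate from Lemma~\ref{lemmaMuProductComponents}. The plan is to observe that since $\mu_{G+H}(z) = \mu_G(z)\+\mu_H(z)$, a real number $z_0$ is a root of $\mu_{G+H}(z)$ if and only if it is a root of $\mu_G(z)$ or of $\mu_H(z)$. Consequently the set of roots of $\mu_{G+H}$ is the union of the sets of roots of $\mu_G$ and $\mu_H$, and taking the largest element of this union gives
\[
\lambda(G+H) \;=\; \max\bigl(\lambda(G),\lambda(H)\bigr).
\]

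There is really no obstacle here: the statement is a one-step consequence of the multiplicativity of $\mu$ over connected components, combined with the elementary fact that the largest root of a product of polynomials is the maximum of their largest roots. Implicitly one uses that the matching polynomial has at least one real root (so that $\lambda$ is well defined), which is automatic since the matching polynomial of any nonempty graph is monic of positive degree with real coefficients, and in fact all its roots are real by the classical Heilmann--Lieb theorem.
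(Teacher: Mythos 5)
Your argument is exactly the one the paper has in mind: the corollary appears immediately after Lemma~\ref{lemmaMuProductComponents} with no further comment, and the intended reasoning is precisely that $\mu_{G+H}=\mu_G\mu_H$ forces the root set of $\mu_{G+H}$ to be the union of the root sets of $\mu_G$ and $\mu_H$, so the largest roots satisfy the claimed identity. Your additional remark about $\lambda$ being well defined via the Heilmann--Lieb real-rootedness theorem is correct and harmless (note that monicity with real coefficients alone would not suffice when the graph has an even number of vertices, but the appeal to Heilmann--Lieb closes that gap).
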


The second lemma relates the matching polynomial to the characteristic polynomial.
\begin{lemma}[Godsil \& Gutman~\cite{GG1981}] 
\label{lemmaMuCycleDef}
If $\CCC_G$ consists of all nontrivial subgraphs of $G$ which are unions of vertex-disjoint cycles (i.e., all subgraphs of $G$ which are regular of degree 2), then
  $$
  \mu_G(z) \;=\; \Phi_G(z) \:+\: \sum_{C\in\CCC_G} \!\!2^{\+\mathrm{comp}(C)} \+ \Phi_{G-C}(z),
  $$
where $\Phi_{G-C}(z)=1$ if $C=G$.
\end{lemma}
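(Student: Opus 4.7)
The plan is to derive the identity by expanding each $\Phi_{G-C}(z)$ on the right-hand side via the classical Sachs coefficient theorem for the characteristic polynomial, then observing that a simple binomial cancellation kills every elementary subgraph of $G$ that contains a cycle component, leaving precisely the matchings. The first ingredient is the Sachs expansion
\[
\Phi_H(z) \;=\; \sum_{L} (-1)^{p(L)}\, 2^{c(L)}\, z^{|V(H)|-|V(L)|},
\]
obtained by grouping Leibniz terms in $\det(zI-A)$ according to the non-fixed support of each permutation: here $L$ ranges over the \emph{elementary} subgraphs of $H$ (those whose connected components are either single edges or cycles of length at least $3$), $p(L)$ denotes the number of components, $c(L)$ the number of cycle components, and the empty subgraph $L=\emptyset$ contributes the leading monomial $z^{|V(H)|}$. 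The factor $2^{c(L)}$ reflects the two cyclic orientations of each cycle and the sign $(-1)^{p(L)}$ encodes the signature of the corresponding permutation.

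Next, I apply this expansion to every $\Phi_{G-C}(z)$ in the claimed identity, treating $\Phi_G(z)$ as the $C=\emptyset$ contribution. Each resulting term is indexed by a pair $(C,L')$, where $C$ is a (possibly empty) vertex-disjoint union of cycles in $G$ and $L'$ is an elementary subgraph of $G-C$. Setting $L = C\cup L'$, such a pair is equivalent to an elementary subgraph $L$ of $G$ together with a distinguished subset of its cycle components (those forming $C$). Writing $e(L)$ and $k(L)$ for the numbers of edge and cycle components of $L$, and tallying the factors---namely $2^{\mathrm{comp}(C)} = 2^i$ from the lemma together with $(-1)^{p(L')}\, 2^{c(L')} = (-1)^{e(L)+k(L)-i}\, 2^{k(L)-i}$ from Sachs, where $i$ denotes the number of cycles assigned to $C$---the aggregate contribution of $L$ becomes
\[
(-1)^{e(L)}\, 2^{k(L)}\, z^{|V(G)|-|V(L)|} \sum_{i=0}^{k(L)} \binom{k(L)}{i}(-1)^{k(L)-i}.
\]

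The inner sum is $(1-1)^{k(L)}$, which vanishes as soon as $L$ possesses any cycle component and equals $1$ exactly when $L$ is cycle-free, i.e.\ a matching. Each surviving matching $L$ of $G$ then contributes $(-1)^{e(L)}\, z^{|V(G)|-2e(L)}$, so the whole right-hand side collapses to the defining expression for $\mu_G(z)$. The main obstacle is establishing the Sachs expansion itself, but this is standard; once it is in hand, everything reduces to the bookkeeping above, which hinges on the clean observation that the weights $2^{\mathrm{comp}(C)}$ and $2^{c(L')}$ combine to $2^{k(L)}$ independently of the splitting, so that the marking of cycle components is controlled by a trivial binomial identity.
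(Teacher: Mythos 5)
Your proposal is correct. The paper does not prove this lemma at all --- it is quoted from Godsil \& Gutman --- so there is no internal argument to compare against; your derivation via Sachs' coefficient theorem is the standard one found in the cited literature. The key steps all check out: the reindexing of pairs $(C,L')$ as an elementary subgraph $L=C\cup L'$ of $G$ together with a distinguished subset of its cycle components is a genuine bijection (vertex-disjointness of the components of $L$ guarantees that the undistinguished part is an elementary subgraph of $G-C$, and every such pair arises this way), the exponent bookkeeping $|V(G-C)|-|V(L')|=|V(G)|-|V(L)|$ is right, the weights combine to $(-1)^{e(L)}2^{k(L)}(-1)^{k(L)-i}$ independently of the split, and the binomial sum $(1-1)^{k(L)}$ annihilates every $L$ with a cycle component, leaving exactly $\sum_k(-1)^k m_k(G)z^{n-2k}=\mu_G(z)$. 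The convention $\Phi_{G-C}(z)=1$ for $C=G$ and the separate treatment of $\Phi_G(z)$ as the $C=\emptyset$ term are also handled consistently. The only part you defer is the Sachs expansion itself, which is indeed standard and reasonable to cite.
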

As an immediate consequence, we have the following:
\begin{cor}[Sachs~\cite{Sachs1964}, Mowshowitz~\cite{Mowshowitz1972}, Lov\'asz \& Pelik\'an~\cite{LP1973}]
\label{corMuPhiTree}
The matching polynomial of a graph 
is identical to its characteristic polynomial 
if and only if the graph is acyclic.
\end{cor}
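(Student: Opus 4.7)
The plan is to apply Lemma \ref{lemmaMuCycleDef} directly, so that the statement reduces to determining when the extra sum
$$S_G(z) \;:=\; \sum_{C\in\CCC_G} 2^{\mathrm{comp}(C)}\+\Phi_{G-C}(z)$$
vanishes identically. One direction is immediate: if $G$ is acyclic then $\CCC_G=\emptyset$ and $S_G(z)$ is the empty sum, so Lemma \ref{lemmaMuCycleDef} yields $\mu_G(z)=\Phi_G(z)$.

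For the converse, I would assume that $G$ contains a cycle and let $g$ denote the girth of $G$ (and set $n=|V(G)|$). The strategy is to extract a single coefficient of $S_G(z)$ and show it is positive, precluding identical vanishing. Since each $\Phi_{G-C}(z)$ is monic of degree $n-|V(C)|$, and every $C\in\CCC_G$, being a vertex-disjoint union of cycles, satisfies $|V(C)|\geqslant g$, only those $C$ with $|V(C)|=g$ can contribute to the coefficient of $z^{n-g}$ in $S_G(z)$. Any such $C$ must consist of a single cycle of length $g$, whence $\mathrm{comp}(C)=1$, and it contributes exactly $2$ to that coefficient. Because $G$ has girth $g$, there is at least one such $C$, so the coefficient of $z^{n-g}$ in $S_G(z)$ equals $2\+k$, where $k\geqslant 1$ is the number of $g$-cycles of $G$. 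Hence $S_G(z)\not\equiv 0$, so $\mu_G(z)\neq\Phi_G(z)$.

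No serious obstacle arises once Lemma \ref{lemmaMuCycleDef} is in hand: the only point worth verifying is that the sum over $\CCC_G$ cannot collapse through cancellation, and this is handled by the degree separation above — the contributions from different values of $|V(C)|$ live in different top-degree monomials, so the $g$-cycle terms cannot be cancelled by anything else in the sum.
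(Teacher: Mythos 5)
Your proof is correct and takes the same route the paper intends: the corollary is deduced directly from Lemma~\ref{lemmaMuCycleDef}, which the paper treats as ``immediate'' and therefore does not elaborate. Your degree-count on $z^{n-g}$ to rule out cancellation in the cyclic case is a clean way to make the converse direction explicit.
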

In particular, their largest roots are identical:
\begin{cor}\label{corLambdaRhoTree}
  If $G$ is a forest, 
  then
$\lambda(G)=\rho(G)$.
\end{cor}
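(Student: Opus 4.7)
The plan is to deduce this directly from Corollary~\ref{corMuPhiTree}, which has already done all the real work. Since a forest contains no cycles, it is in particular acyclic, so Corollary~\ref{corMuPhiTree} applies and gives the polynomial identity $\mu_G(z) = \Phi_G(z)$. Two polynomials that are identical as polynomials have the same set of roots, hence the same largest root, so $\lambda(G) = \rho(G)$ by the very definitions of $\lambda$ and $\rho$ recalled at the start of this section.

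There is no obstacle here: the corollary is essentially a one-line consequence, and the substantive content is buried in Lemma~\ref{lemmaMuCycleDef} (the cycle-expansion of $\mu_G$ in terms of $\Phi_{G-C}$). If I wanted to avoid citing Corollary~\ref{corMuPhiTree} directly, I could instead invoke Lemma~\ref{lemmaMuCycleDef} itself: when $G$ is a forest, the family $\mathcal{C}_G$ of nontrivial vertex-disjoint unions of cycles is empty, so the sum in that lemma is empty and $\mu_G(z) = \Phi_G(z)$ immediately. Either route is equally short.

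In summary, the proof I would write consists of exactly two sentences: observe that a forest is acyclic, apply Corollary~\ref{corMuPhiTree} (or directly Lemma~\ref{lemmaMuCycleDef}) to conclude $\mu_G = \Phi_G$, and read off equality of largest roots.
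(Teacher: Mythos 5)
Your proof is correct and matches the paper's own reasoning exactly: the corollary is stated as an immediate consequence of Corollary~\ref{corMuPhiTree}, since identical polynomials have identical largest roots. Your alternative route via the empty sum in Lemma~\ref{lemmaMuCycleDef} is also valid but adds nothing beyond what Corollary~\ref{corMuPhiTree} already encapsulates.
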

Thus, using Corollaries~\ref{corNoNegCycles} and~\ref{corLambdaMaxComponents}, we have the following alternative characterisation for the growth rates of acyclic geometric grid classes:

\thmbox{
\begin{cor}\label{corForestRho}
  If $G(M)$ is a forest, then $\gr(\Geom(M))=\rho(G(M))^2$.
\end{cor}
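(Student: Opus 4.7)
The plan is to chain together Corollary~\ref{corNoNegCycles} and Corollary~\ref{corLambdaRhoTree}. Since a forest contains no cycles whatsoever, it certainly contains no negative cycles, so the hypothesis of Corollary~\ref{corNoNegCycles} is satisfied and we deduce $\gr(\Geom(M)) = \lambda(G(M))^2$. On the other hand, Corollary~\ref{corLambdaRhoTree} applies directly to the forest $G(M)$ to give $\lambda(G(M)) = \rho(G(M))$. Substituting the second identity into the first yields $\gr(\Geom(M)) = \rho(G(M))^2$, as required.

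There is no real obstacle here: both ingredients have just been established, and the forest hypothesis is strictly stronger than the ``no negative cycles'' hypothesis that Corollary~\ref{corNoNegCycles} demands. If one wishes to make the reduction to components visible, one can write $G(M)$ as a disjoint union of trees $T_1,\ldots,T_k$; Corollary~\ref{corLambdaMaxComponents} then gives $\lambda(G(M)) = \max_i \lambda(T_i)$, the analogous identity $\rho(G(M)) = \max_i \rho(T_i)$ holds for the spectral radius (the adjacency matrix of $G(M)$ is block-diagonal with blocks indexed by the $T_i$), and Corollary~\ref{corLambdaRhoTree} equates $\lambda(T_i)$ with $\rho(T_i)$ on each tree. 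Either route reaches the stated conclusion in a single line after the preparatory corollaries are invoked.
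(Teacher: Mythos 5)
Your proof is correct and follows the paper's own route exactly: the corollary is obtained by combining Corollary~\ref{corNoNegCycles} (a forest has no negative cycles) with Corollary~\ref{corLambdaRhoTree}, and your optional remark about components mirrors the paper's appeal to Corollary~\ref{corLambdaMaxComponents}. Nothing is missing.
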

} 

The last, and most important, of the three lemmas allows us to determine the largest root of the matching polynomial of a graph from
the spectral radius of a related tree.
It
is a consequence of the fact,
determined by Godsil in~\cite{Godsil1981}, 
that the moments (sums of the powers of the roots) of
$\mu_G(z)$ enumerate certain closed walks on~$G$, which he calls \emph{tree-like}.
This is analogous to the fact that the moments of
$\Phi_G(z)$
count \emph{all} closed walks on~$G$.
On a tree, all closed walks are tree-like.

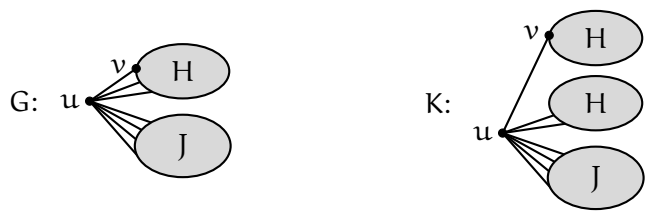
\begin{figure}[ht]
$$
\raisebox{12pt}{
\begin{tikzpicture}[scale=0.36]
    \draw [thick] (0,.3)--(1.7,1.5);
    \draw [thick] (0,.3)--(2.1,1.1);
    \draw [thick] (0,.3)--(3,0.75);
    \draw [fill=gray!30!white,thick] (3.4,1.4) circle [x radius=1.7, y radius=1];
    \draw [fill] (0,.3) circle [radius=0.15];
    \node[left]at(0,.3){$u$};
    \node[left]at(-1.5,.3){$G$:};
    \draw [fill] (1.7,1.5) circle [radius=0.15];
    \node[left]at(1.7,1.6){$v$};
    \node[]at(3.4,1.4){$H$};
    \draw [thick] (0,.3)--(3,-0.75);
    \draw [thick] (0,.3)--(2.33,-1);
    \draw [thick] (0,.3)--(2.17,-1.5);
    \draw [thick] (0,.3)--(2,-2);
    \draw [fill=gray!30!white,thick] (3.4,-1.35) circle [x radius=1.75, y radius=1.15];
    \node[]at(3.4,-1.35){$J$};
\end{tikzpicture}
}
\qquad\qquad\qquad
\begin{tikzpicture}[scale=0.36]
    \draw [fill=gray!30!white,thick] (3.4,3.8) circle [x radius=1.7, y radius=1];
    \draw [thick] (0,.3)--(1.7,3.9);
    \draw [thick] (0,.3)--(2,1);
    \draw [thick] (0,.3)--(3,0.75);
    \draw [fill=gray!30!white,thick] (3.4,1.4) circle [x radius=1.7, y radius=1];
    \draw [fill] (0,.3) circle [radius=0.15];
    \node[left]at(0,.3){$u$};
    \node[left]at(-1.5,1.4){$K$:};
    \draw [fill] (1.7,3.9) circle [radius=0.15];
    \node[left]at(1.7,4){$v$};
    \node[]at(3.4,1.4){$H$};
    \node[]at(3.4,3.8){$H$};
    \draw [thick] (0,.3)--(3,-0.75);
    \draw [thick] (0,.3)--(2.33,-1);
    \draw [thick] (0,.3)--(2.17,-1.5);
    \draw [thick] (0,.3)--(2,-2);
    \draw [fill=gray!30!white,thick] (3.4,-1.35) circle [x radius=1.75, y radius=1.15];
    \node[]at(3.4,-1.35){$J$};
\end{tikzpicture}
\qquad
$$
\caption{Expanding $G$ at $u$ along $uv$; $H$ is the component of $G-u$ that contains $v$}
\label{figExpandingVertex}
\end{figure}
\begin{lemma}[Godsil~\cite{Godsil1981}; see also~\cite{Godsil1993} and~\cite{LP2009}]
\label{lemmaExpandVertex}
  Let $G$ be a graph and let $u$ and $v$ be adjacent vertices in a cycle of $G$.
  Let $H$ be the component of $G-u$ that contains $v$.
  Now let $K$ be the graph constructed by taking a copy of $G\deledge uv$ and a copy of $H$ and joining the occurrence of $u$ in the copy of $G\deledge uv$ to the occurrence of $v$ in the copy of $H$ 
  (see Figure~\ref{figExpandingVertex}).
  Then $\lambda(G)=\lambda(K)$.
\end{lemma}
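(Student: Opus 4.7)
The plan is to prove the stronger factorisation $\mu_K(z) = \mu_G(z)\,\mu_H(z)$, from which $\lambda(K)=\lambda(G)$ will follow by Corollary~\ref{corLambdaMaxComponents} together with the monotonicity of $\lambda$ under induced subgraphs.

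The main tool is the standard edge-deletion recurrence for the matching polynomial,
\[
\mu_F(z) \;=\; \mu_{F\deledge xy}(z) \;-\; \mu_{F-\{x,y\}}(z),
\]
valid for any edge $xy$ of any graph $F$ (proved by partitioning $k$-matchings according to whether they use $xy$). Writing $G-u = H + H'$ with $H'$ the (possibly empty) union of the remaining components of $G-u$, the recurrence applied to $uv$ in $G$ together with Lemma~\ref{lemmaMuProductComponents} yields
\[
\mu_G(z) \;=\; \mu_{G\deledge uv}(z) \;-\; \mu_{H'}(z)\,\mu_{H-v}(z).
\]
Applied now to the new edge $uv'$ of $K$, where $v'$ denotes the copy of $v$ in the adjoined copy of $H$, and observing that deleting $uv'$ produces the disjoint union $(G\deledge uv) + H$ while deleting its two endpoints produces $(G-u)+(H-v) = H + H' + (H-v)$, the recurrence becomes, after factoring out $\mu_H(z)$,
\[
\mu_K(z) \;=\; \mu_H(z)\bigl[\mu_{G\deledge uv}(z) \,-\, \mu_{H'}(z)\,\mu_{H-v}(z)\bigr] \;=\; \mu_G(z)\,\mu_H(z).
\]

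Having the factorisation, Corollary~\ref{corLambdaMaxComponents} gives $\lambda(K) = \max(\lambda(G),\lambda(H))$, so the lemma reduces to the inequality $\lambda(H)\leqslant\lambda(G)$. This monotonicity statement is the main obstacle: I would invoke the interlacing theorem of Heilmann--Lieb (see Godsil~\cite{Godsil1993}), which asserts that for any vertex $x$ of any graph $F$, the roots of $\mu_{F-x}(z)$ interlace those of $\mu_F(z)$; in particular $\lambda(F-x) \leqslant \lambda(F)$. Since $H$ is an induced subgraph of $G$, iterating this fact over the vertices of $V(G)\setminus V(H)$ yields $\lambda(H)\leqslant\lambda(G)$, whence $\lambda(K)=\lambda(G)$.
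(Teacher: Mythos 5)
Your argument is correct. Note first that the paper does not prove this lemma at all: it is quoted from Godsil with only a citation, so there is no ``paper proof'' to match. What you have written is essentially the standard textbook argument for Godsil's identity $\mu_K(z)=\mu_G(z)\,\mu_H(z)$: the deletion recurrence $\mu_F=\mu_{F\deledge xy}-\mu_{F-\{x,y\}}$ applied to $uv$ in $G$ and to $uv'$ in $K$, together with multiplicativity over components, gives the factorisation exactly as you compute (and, as your derivation shows, the hypothesis that $u$ and $v$ lie on a cycle is not needed for the identity itself --- it matters only because that is when the expansion is useful for destroying cycles). Two small remarks. First, Corollary~\ref{corLambdaMaxComponents} is literally a statement about disjoint unions $G+H$, whereas $K$ is not such a union; what you are really using is the underlying fact that the largest root of a product of matching polynomials is the maximum of the largest roots of the factors, which is exactly what the factorisation delivers. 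Second, you are right to reach for Heilmann--Lieb interlacing to get $\lambda(H)\leqslant\lambda(G)$ rather than the paper's Corollary~\ref{corStrictMono}: that corollary is itself deduced from Lemma~\ref{lemmaExpandVertex}, so invoking it here would be circular, and the interlacing theorem (roots of $\mu_{F-x}$ interlace those of $\mu_F$, hence $\lambda(F-x)\leqslant\lambda(F)$) is the correct independent input. With $H$ obtained from $G$ by successive vertex deletions, iterating interlacing gives $\lambda(H)\leqslant\lambda(G)$ and hence $\lambda(K)=\max(\lambda(G),\lambda(H))=\lambda(G)$ as required.
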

The process that is described in Lemma~\ref{lemmaExpandVertex}
we will call
``\emph{expanding} $G$ \emph{at} $u$ \emph{along} $uv$''.
Each such expansion of a graph $G$ produces a graph with fewer cycles than $G$.
Repeated application of this process will thus eventually result in a forest $F$ such that $\lambda(F)=\lambda(G)$.
We shall say that $F$ results from \emph{fully expanding}~$G$.
Hence, by
Corollaries~\ref{corLambdaMaxComponents} and~\ref{corLambdaRhoTree},
the largest root of the matching polynomial of a graph equals the spectral radius of some tree:
for any graph~$G$, there is a tree $T$ such that $\lambda(G)=\rho(T)$.

It is readily observed that
every tree is the row-column graph of some geometric grid class. Thus
we have the following characterisation of geometric grid class growth rates.

\thmbox{
\begin{cor}
  The set of growth rates of geometric grid classes consists of the squares of the spectral radii of trees.
\end{cor}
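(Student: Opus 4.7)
The plan is to prove the two set inclusions separately, using Theorem~\ref{thmGeomClassGrowthRate} (together with the expansion process from Lemma~\ref{lemmaExpandVertex}) for one direction, and a direct construction for the other.

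For the forward inclusion (every geometric grid class growth rate is the square of the spectral radius of some tree), I would start with an arbitrary \opmo{} matrix $M$ and apply Theorem~\ref{thmGeomClassGrowthRate} to write $\gr(\Geom(M))=\lambda(G(\drm))^2$. I would then observe that $G(\drm)$ is a finite graph, so iteratively expanding it at a vertex along an edge of any cycle, as in Lemma~\ref{lemmaExpandVertex}, strictly reduces the number of cycles at each step while preserving $\lambda$. This process must terminate in some forest $F$ with $\lambda(F)=\lambda(G(\drm))$. Corollary~\ref{corLambdaMaxComponents} then lets me replace $F$ by a single tree component $T$ realising the maximum $\lambda(T)=\lambda(F)$, and Corollary~\ref{corLambdaRhoTree} converts this into $\rho(T)$, giving $\gr(\Geom(M))=\rho(T)^2$ as required.

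For the reverse inclusion (every square of a tree's spectral radius arises as a growth rate), I would take an arbitrary tree $T$ and exploit the fact that $T$ is bipartite. Choosing a bipartition $(R,C)$ with $|R|=t$ and $|C|=u$, I would label the vertices of $R$ as $r_1,\ldots,r_t$ and those of $C$ as $c_1,\ldots,c_u$, then define the $t\ttimes u$ matrix $M$ by $M_{i,j}=1$ if $r_ic_j$ is an edge of $T$ and $M_{i,j}=0$ otherwise. By construction, $G(M)=T$, and since $T$ is acyclic it contains no negative cycles. Corollary~\ref{corNoNegCycles} therefore applies directly, yielding $\gr(\Geom(M))=\lambda(T)^2$, which equals $\rho(T)^2$ by Corollary~\ref{corLambdaRhoTree}.

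The two inclusions together give the claimed characterisation. The conceptual work is essentially all done by the earlier machinery: the expansion lemma reduces the general case to forests, the forest identity $\lambda=\rho$ bridges matching polynomials and characteristic polynomials, and bipartiteness of trees allows each tree to be realised as a row-column graph. The only mild subtlety is recognising in the forward direction that one is free to pass from $\lambda(G(\drm))$ to the spectral radius of a \emph{single} tree component (rather than a forest), which is handled cleanly by Corollary~\ref{corLambdaMaxComponents}; no further estimates or calculations are needed.
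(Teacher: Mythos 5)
Your proposal is correct and follows essentially the same route as the paper: expand $G(\drm)$ into a forest using Lemma~\ref{lemmaExpandVertex}, pass to the dominant tree component, and use $\lambda=\rho$ on trees for the forward direction; and for the reverse direction realise any tree as a row-column graph (the paper simply asserts this as ``readily observed,'' whereas you spell out the bipartition argument) and apply Corollary~\ref{corNoNegCycles}. No gap; the only cosmetic shortcut available is that the reverse direction can be phrased directly via Corollary~\ref{corForestRho}.
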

} 

\HIDE{
In~\cite{Shearer1989}, Shearer proved that, for any $\rho\geqslant\sqrt{2+\sqrt{5}}$, there exists a sequence of trees $T_1,T_2,\ldots$ such that $\liminfty[k]\rho(T_k)=\rho$. Thus we have the following density result for the set of geometric grid class growth rates:

\thmbox{
\begin{cor}
  Every value at least $2+\sqrt{5}$ is a limit point of growth rates of
geometric grid classes.
\end{cor}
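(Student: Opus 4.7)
The plan is to derive the statement as an immediate consequence of the preceding characterisation together with Shearer's theorem quoted just above. By that characterisation, the set of growth rates of geometric grid classes coincides with $\{\rho(T)^2 : T \text{ a tree}\}$, so to show that a value $v \geqslant 2+\sqrt{5}$ is a limit point of this set it suffices to produce a sequence of trees $T_k$ whose spectral radii squared converge to $v$, with infinitely many of those values distinct from $v$.

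Given $v \geqslant 2+\sqrt{5}$, I would set $\rho = \sqrt{v}$, so that $\rho \geqslant \sqrt{2+\sqrt{5}}$. This places $\rho$ in precisely the range to which Shearer's theorem applies, yielding trees $T_1, T_2, \ldots$ with $\rho(T_k) \to \rho$. Continuity of the squaring map then gives $\rho(T_k)^2 \to \rho^2 = v$. Since every tree arises as the row-column graph of some \opmo{} matrix (as observed just before the preceding corollary), each $\rho(T_k)^2$ is itself the growth rate of a geometric grid class. Hence $v$ is a limit point of the set of such growth rates.

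There is essentially no obstacle here: all of the substance is already contained in Shearer's theorem and in the characterisation $\gr(\Geom(M)) \in \{\rho(T)^2\}$. The only minor point to check is that $v$ is a genuine limit point rather than an isolated attained value; this may be handled either by invoking the fact that Shearer's construction produces strictly increasing sequences approaching $\rho$ from below, or by simply passing to a subsequence on which $\rho(T_k) \neq \rho$. In either case the deduction is entirely mechanical.
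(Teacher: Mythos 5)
Your argument is correct and matches the paper's reasoning exactly: the paper derives this corollary directly from Shearer's theorem (that every $\rho\geqslant\sqrt{2+\sqrt{5}}$ is a limit of spectral radii of trees) combined with the characterisation of geometric grid class growth rates as squares of spectral radii of trees. Your additional remark about passing to a subsequence to guarantee a genuine limit point is a sensible bit of care, but otherwise there is no difference in approach.
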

} 
} 

The spectral radii of connected graphs satisfy the following strict monotonicity condition:
\begin{lemma}[\cite{CRS2010} Proposition~1.3.10]\label{lemmaStrictMono}
  If $G$ is connected and $H$ is a proper subgraph of $G$, then we have $\rho(H)<\rho(G)$.
\end{lemma}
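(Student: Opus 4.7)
The plan is to apply the Perron--Frobenius theorem to the adjacency matrices of $G$ and of a suitable component of $H$, then evaluate one bilinear form in two different ways to extract the strict inequality. Write $A(G)$ for the adjacency matrix of $G$.

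First I would invoke Perron--Frobenius: since $G$ is connected, $A(G)$ is irreducible and nonnegative, so there is a vector $x\in\mathbb{R}^{V(G)}$ with $x>0$ coordinatewise and $A(G)\,x=\rho(G)\,x$. Next I would pick a connected component $H'$ of $H$ for which $\rho(H')=\rho(H)$; such a component exists because $A(H)$ is block-diagonal across components, so the spectrum of $H$ is the union of the spectra of its components. Let $y\in\mathbb{R}^{V(G)}$ be a Perron eigenvector of $A(H')$, extended by zero outside $V(H')$; then $y$ is strictly positive on $V(H')$ and zero elsewhere, with $A(H')\,y=\rho(H')\,y$.

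The core step is to evaluate $S := y^{T} A(G)\, x$ in two ways. On one hand, $S=\rho(G)\,y^{T}x$. On the other, summing over edges,
$$
S \;=\; \sum_{ij\in E(G)} (y_i x_j + y_j x_i) \;=\; \rho(H')\,y^{T}x \;+\; \Delta,
$$
where the first term captures the edges in $E(H')$ (since $y^{T}A(H')\,x=\rho(H')\,y^{T}x$) and $\Delta$ collects the contributions $(y_i x_j + y_j x_i)$ from edges of $G$ not in $E(H')$. Each such contribution is nonnegative because $x>0$ and $y\ge 0$, so immediately $\rho(G)\ge\rho(H)$; the real work is to ensure that $\Delta$ is strictly positive.

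To force $\Delta>0$ I would split into cases. If $V(H')$ is a proper subset of $V(G)$, then connectedness of $G$ yields an edge $ij\in E(G)$ with $i\in V(H')$ and $j\notin V(H')$, contributing $y_i x_j>0$ to $\Delta$. Otherwise $V(H')=V(G)$, so $H'$ spans $G$ and, being a proper subgraph, satisfies $E(H')\subsetneq E(G)$; any edge $ij\in E(G)\setminus E(H')$ then has both endpoints in $V(H')$ and contributes $y_i x_j+y_j x_i>0$. Either way $\Delta>0$, and hence $\rho(G)>\rho(H)$. The main subtlety is exactly this case split: a proper subgraph may be missing vertices, missing edges, or be disconnected, and passing to a component of $H$ realising $\rho(H)$ is what makes the Perron vector $y$ strictly positive on a well-defined vertex set so that both cases go through cleanly.
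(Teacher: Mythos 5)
Your argument is correct. Note that the paper does not give its own proof of this lemma: it is stated as a citation to [CRS2010, Proposition~1.3.10], so there is no in-paper argument to compare against. Your proof is the standard Perron--Frobenius one, packaged via the bilinear form $y^{T}A(G)\,x$ with the Perron vector of $G$ on one side and the (zero-extended) Perron vector of a spectral-radius-attaining component of $H$ on the other; this is a clean variant of the usual Rayleigh-quotient argument ($\rho(H)=y^{T}A(H)\,y/y^{T}y\le y^{T}A(G)\,y/y^{T}y\le\rho(G)$, then analysing equality), with the advantage that the right factor $x$ pins the value of $S$ to $\rho(G)\,y^{T}x$ exactly, so the strictness reduces entirely to exhibiting one positive term in $\Delta$, which your case split (missing vertex vs.\ missing edge on a spanning component) handles correctly. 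The one step that deserves to be made explicit is that $y^{T}A(H')\,x=\rho(H')\,y^{T}x$ uses the symmetry of $A(H')$ to move it onto $y$; as written the reader must supply that. Also, for $H$ edgeless the ``component attaining $\rho(H)$'' is an isolated vertex with $A(H')=[0]$; Perron--Frobenius is vacuous there but the argument still goes through with any positive scalar as $y$, which is worth a remark if you want the statement in full generality.
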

Lemma~\ref{lemmaExpandVertex} enables us to prove
the analogous fact for the largest roots of matching polynomials, from which we can deduce a monotonicity result for geometric grid classes:
\begin{cor}\label{corStrictMono}
If $G$ is connected and $H$ is a proper subgraph of $G$, then $\lambda(H)<\lambda(G)$.
\end{cor}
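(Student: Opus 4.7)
Plan:

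My plan is to reduce Corollary~\ref{corStrictMono} to the strict monotonicity of the spectral radius of connected graphs (Lemma~\ref{lemmaStrictMono}) by repeated application of the expansion operation from Lemma~\ref{lemmaExpandVertex}.

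First I would reduce to the case in which $H$ is connected: by Corollary~\ref{corLambdaMaxComponents}, $\lambda(H)=\max_i\lambda(H_i)$ over the components of $H$, and each $H_i$ is itself a proper subgraph of the connected graph $G$, so it suffices to treat connected $H$. Next, if $G$ happens to be a tree, then $H$ is also a tree (and a connected proper subgraph), and the claim reduces directly via Corollary~\ref{corLambdaRhoTree} to Lemma~\ref{lemmaStrictMono}, giving $\lambda(H)=\rho(H)<\rho(G)=\lambda(G)$.

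For the main case, where $G$ contains a cycle, the plan is to apply Lemma~\ref{lemmaExpandVertex} iteratively to $G$, and when necessary to $H$ in parallel, until both are reduced to forests $F$ and $F_H$ with $\lambda(F)=\lambda(G)$ and $\lambda(F_H)=\lambda(H)$, and with $F_H$ embedded as a proper subgraph of the spectral-radius-maximising component $T$ of $F$. At each step I would pick a cycle-edge $e=uv$ of $G$ and distinguish two cases: if $e\notin E(H)$, then $H\subseteq G\deledge e$ is automatically a subgraph of the expansion $K$; otherwise every cycle-edge of $G$ lies in $H$, in which case every cycle of $G$ is also a cycle of $H$ (since all edges of a cycle $C\subseteq G$ lying in $H$ forces all vertices of $C$ into $V(H)$ as well), so $e$ also lies on a cycle of $H$ and Lemma~\ref{lemmaExpandVertex} applies to $H$ simultaneously, preserving $\lambda(H)$ and keeping the expanded $H'$ as a subgraph of $K$. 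Once both are forests, Lemma~\ref{lemmaStrictMono} inside $T$ combined with Corollary~\ref{corLambdaRhoTree} delivers $\lambda(H)=\rho(F_H)<\rho(T)\leq\rho(F)=\lambda(G)$.

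The hard part is the bookkeeping in the parallel-expansion phase. A single expansion does not always strictly decrease the cyclomatic number of $G$ (one computes $c(K)=c(G)-1+c(H_u)$, where $H_u$ is the duplicated component, which can even exceed $c(G)$), so termination of the process requires a more refined invariant than cycle count. One must also verify that the duplicated copies created during the successive expansions can be chosen coherently so that the final forest $F_H$ lies inside a single tree component of $F$, making Lemma~\ref{lemmaStrictMono} genuinely applicable at the end.
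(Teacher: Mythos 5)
Your plan is essentially the paper's argument --- run the expansion of Lemma~\ref{lemmaExpandVertex} on $G$ and $H$ in parallel until both are forests, then finish with Corollary~\ref{corLambdaRhoTree} and Lemma~\ref{lemmaStrictMono} --- but you run the parallel phase in the opposite direction, and that reversal is precisely what creates the two difficulties you leave open. The paper first \emph{fully expands $H$}, at vertices $u_1,\dots,u_k$ say, obtaining a forest $F$ with $\lambda(H)=\rho(F)$; termination of full expansion is part of the setup immediately following Lemma~\ref{lemmaExpandVertex}, so no new invariant is needed inside this proof. Each of these steps is automatically legal for $G$ as well (a cycle of $H$ is a cycle of $G$, and the component of $G-u_i$ containing $v_i$ contains that of $H-u_i$), so mirroring them in $G$ keeps the current image of $H$ embedded in the current image of $G$ with no case analysis at all. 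Once $H$ has become the forest $F$, the remaining expansions needed to finish off $G$ can always be taken along cycle-edges not belonging to $F$ --- if every edge of some cycle lay in $F$, that cycle would lie in $F$, contradicting acyclicity --- and such expansions leave $F$ intact inside $G\setminus uv\subseteq K$. Your final worry is also moot: expanding along a cycle-edge preserves connectedness (the deleted edge is not a bridge, and the new copy is attached by a new edge), so fully expanding the connected graph $G$ yields a single tree $T$ containing $F$ as a proper subgraph, and Lemma~\ref{lemmaStrictMono} finishes the proof.

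Your observation that the cyclomatic number satisfies $c(K)=c(G)-1+c(H_u)$ and so can increase is correct, and it is a legitimate quibble with the paper's one-line justification that each expansion ``produces a graph with fewer cycles'' (the number of distinct cycles can likewise increase for a bad choice of expansion vertex). But that termination claim belongs to the definition of ``fully expanding'' in the surrounding text, not to this corollary; you are entitled to cite it rather than re-derive it, and trying to re-derive it is what turned your proof into open-ended bookkeeping.
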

\begin{proof}
  Suppose we fully expand $H$
  (at vertices $u_1,\ldots,u_k$, say),
  then 
  the result is a forest $F$ such that $\lambda(H)=\rho(F)$.
  Now suppose that we repeatedly expand $G$ analogously at $u_1,\ldots,u_k$, and then continue to fully expand the resulting graph. The outcome is a tree $T$ (since $G$ is connected)
  such that $F$ is a proper subgraph of $T$ and
  $\lambda(G)=\rho(T)$.
  The result follows from Lemma~\ref{lemmaStrictMono}.
\end{proof}
Adding a non-zero cell to a \opmo{} matrix $M$ adds an edge to $G(M)$.
Thus, geometric grid classes satisfy the following monotonicity condition:

\thmbox{
\begin{cor}\label{corAddCell}
If $G(M)$ is connected and $M'$ results from adding a non-zero cell to $M$
in such a way that $G(M')$ is also connected,
then $\gr(\Geom(M'))>\gr(\Geom(M))$.
\end{cor}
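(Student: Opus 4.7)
The plan is to combine Theorem~\ref{thmGeomClassGrowthRate} with the strict monotonicity result Corollary~\ref{corStrictMono}. Write $H = G(M^{\times 2})$ and $H' = G((M')^{\times 2})$; the theorem asserts that the two growth rates in question are $\lambda(H)^2$ and $\lambda(H')^2$, so it suffices to prove the strict inequality $\lambda(H) < \lambda(H')$. The easy observation is that $H$ is a proper subgraph of $H'$, since $(M')^{\times 2}$ differs from $M^{\times 2}$ only in the single $2\times 2$ block corresponding to the added cell, and this contributes two new edges to $H'$ (plus possibly some new row or column vertices, if the new cell occupies a new row or column of $M$).

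The main obstacle is that $H$ need not be connected even when $G(M)$ is---for instance, $M=[1]$ already yields two disjoint edges---so Corollary~\ref{corStrictMono} cannot be invoked directly on $H \subsetneq H'$. To handle this, I would first analyse the component structure of double-refinement graphs. Combining Lemma~\ref{lemmaDoubleRefinementConsistentOrientation} with Lemma~\ref{lemmaNegativeCycles} and the ``signing'' of a negative-cycle-free graph, one checks that, when $G(M)$ is connected, $H$ is connected whenever $G(M)$ contains a negative cycle, and otherwise $H$ consists of two components each isomorphic to $G(M)$. The analogous statement holds for $H'$ relative to $G(M')$.

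Since adding an edge cannot remove a cycle, only three cases arise, corresponding to whether neither, only $G(M')$, or both of $G(M)$ and $G(M')$ contain a negative cycle. In each case I would locate a connected component $C$ of $H'$ that properly contains a subgraph $D$ with $\lambda(D) = \lambda(H)$: if neither graph has a negative cycle, take $D$ and $C$ to be corresponding components of $H$ and $H'$ (isomorphic to $G(M) \subsetneq G(M')$); if both do, take $D = H$ and $C = H'$; and if only $G(M')$ has a negative cycle, take $C = H'$ (itself connected) with $D$ either component of $H$ (isomorphic to $G(M)$ and containing strictly fewer vertices than $H'$). Corollaries~\ref{corLambdaMaxComponents} and~\ref{corStrictMono} then yield $\lambda(H) = \lambda(D) < \lambda(C) \leq \lambda(H')$, and squaring gives the desired strict inequality of growth rates.
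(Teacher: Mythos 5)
Your proposal is correct, and its core is exactly the paper's intended argument: reduce to the strict monotonicity of $\lambda$ under taking proper subgraphs of a connected graph (Corollary~\ref{corStrictMono}). The paper justifies Corollary~\ref{corAddCell} with the single remark that adding a cell adds an edge to $G(M)$, which silently skips the point you rightly flag: the growth rate is governed by $G(\drm)$, not $G(M)$, and $G(\drm)$ is disconnected precisely when $G(M)$ has no negative (``odd'') cycle, so Corollary~\ref{corStrictMono} cannot be applied blindly to $G(\drm)\subsetneq G((M')^{\times2})$. Your three-way case analysis, using the component structure of the double refinement (the paper's Observations~\ref{obsEvenx2} and~\ref{obsOddx2}, which it only records later, in the cycle-parity subsection) together with Corollary~\ref{corLambdaMaxComponents}, correctly closes this gap; note that the case in which $G(M)$ has a negative cycle but $G(M')$ does not indeed cannot occur, as you observe. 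So this is the same route as the paper's, carried out with more care than the paper itself bothers to supply.
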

} 

\subsection{Cycle parity}

The growth rate of a geometric grid class
depends on the parity of its cycles.
Consider the case of $G(M)$ being a cycle graph $C_n$.
If $G(M)$ is a negative cycle, then $G(\drm)=C_{2n}$.
Now, by Lemma~\ref{lemmaExpandVertex}, we have $\lambda(C_n)=\rho(P_{2n-1})$, where $P_n$ is the path graph on $n$ vertices.
The spectral radius of a graph on $n$ vertices is $2\cos\frac{\pi}{n+1}$.
So,
\begin{equation}\label{eqGRGeomCycle}
\gr(\Geom(M)) \;=\;
\begin{cases}
4\cos^2\frac{\pi}{2n}, & \text{if $G(M)$ is a positive cycle;} \\[5pt]
4\cos^2\frac{\pi}{4n}, & \text{if $G(M)$ is a negative cycle.}
\end{cases}
\end{equation}
Thus the geometric grid class whose row-column graph is a
negative cycle has a greater growth rate than the class
whose row-column graph is a positive cycle.
As another example,
\begin{equation}\label{eqGRGeomExNegCyc}
  \gr\!\left(\begin{geommx}1&\pos0&-1\\ 1&-1&\pos1 \end{geommx}\right) \;=\; 3+\sqrt{2} \;\approx\; 4.41421,
\end{equation}
whereas
\begin{equation}\label{eqGRGeomExPosCyc}
  \gr\!\left(\begin{geommx}-1&\pos0&-1\\ \pos1&-1&\pos1 \end{geommx}\right) \;=\; 4.
\end{equation}
The former, containing a negative cycle,
has a greater growth rate than the latter, whose cycle is positive.
This is typical;
we will prove the following result:

\thmbox{
\begin{cor}\label{corNegateCell}
If $G(M)$ is connected and contains no negative cycles, and $M_{\+\negsub}$ 
results from 
changing the sign of
a single entry of $M$ that is in a cycle (thus making one or more cycles in $G(M_{\+\negsub})$ negative),
then
$\gr(\Geom(M_{\+\negsub})) > \gr(\Geom(M))$.
\end{cor}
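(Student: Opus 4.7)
The plan is to reduce the corollary to the inequality $\lambda(G(M_{\negsub}^{\times 2})) > \lambda(G(M))$ and verify this by exhibiting a proper subgraph of $G(M_{\negsub}^{\times 2})$ whose largest matching-polynomial root equals $\lambda(G(M))$; Corollary~\ref{corStrictMono} will then close the argument. By Corollary~\ref{corNoNegCycles} we have $\gr(\Geom(M))=\lambda(G(M))^2$, and by Theorem~\ref{thmGeomClassGrowthRate} we have $\gr(\Geom(M_{\negsub}))=\lambda(G(M_{\negsub}^{\times 2}))^2$, so the inequality above is exactly the desired conclusion.

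I would first unpack the structure of $G(M_{\negsub}^{\times 2})$. Let $e=r_{i_0}c_{j_0}$ denote the edge whose corresponding matrix entry is flipped. Because $G(M)$ has no negative cycles, $G(M^{\times 2})$ consists of two connected components, each isomorphic to $G(M)$; after a convenient relabelling of the vertices, we may call these components $G^a$ and $G^b$ with vertex sets $\{r_i^a,c_j^a\}$ and $\{r_i^b,c_j^b\}$, arranged so that every edge $r_ic_j$ of $G(M)$ lifts to $r_i^ac_j^a\in G^a$ and $r_i^bc_j^b\in G^b$. Flipping the sign of the entry corresponding to $e$ alters only the lifts of $e$: the parallel edges $e^a=r_{i_0}^ac_{j_0}^a$ and $e^b=r_{i_0}^bc_{j_0}^b$ are replaced in $G(M_{\negsub}^{\times 2})$ by the crossed edges $e_{ab}=r_{i_0}^ac_{j_0}^b$ and $e_{ba}=r_{i_0}^bc_{j_0}^a$. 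Since $e$ lies in a cycle, $G(M)-e$ is connected, so each of $G^a\deledge e^a$ and $G^b\deledge e^b$ is connected, and the crossed edges link them, making $G(M_{\negsub}^{\times 2})$ connected.

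The crucial step is to apply Lemma~\ref{lemmaExpandVertex} to $G(M)$ at $u=r_{i_0}$ along $uv=e$, which is permissible since $e$ lies in a cycle. Writing $H$ for the component of $G(M)-r_{i_0}$ containing $c_{j_0}$, the lemma yields a graph $K$ comprising a copy of $G(M)\deledge e$ together with a disjoint fresh copy of $H$, joined by a single new edge from $r_{i_0}$ (in the first piece) to the fresh copy of $c_{j_0}$, and satisfying $\lambda(K)=\lambda(G(M))$. I claim $K$ embeds as a proper subgraph of $G(M_{\negsub}^{\times 2})$: map the copy of $G(M)\deledge e$ onto $G^a\deledge e^a$ via $r_i\mapsto r_i^a$, $c_j\mapsto c_j^a$; map the fresh copy of $H$ into $G^b\deledge e^b$ via $r_i\mapsto r_i^b$, $c_j\mapsto c_j^b$ (valid because $r_{i_0}\notin V(H)$, so no edge of $H$ is sent to the missing edge $e^b$); and send the joining edge to $e_{ab}$. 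The embedding is proper since $r_{i_0}^b\in V(G(M_{\negsub}^{\times 2}))$ is not in the image of $K$. Because $G(M_{\negsub}^{\times 2})$ is connected, Corollary~\ref{corStrictMono} then gives $\lambda(G(M))=\lambda(K)<\lambda(G(M_{\negsub}^{\times 2}))$, and squaring completes the proof.

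The main obstacle is that $G(M)$ itself is \emph{not} naturally a subgraph of $G(M_{\negsub}^{\times 2})$: the two lifts of $e$ have been rewired across the double cover, so no embedding of $G(M)$ confined to a single side can close up the cycle through $r_{i_0}c_{j_0}$. Lemma~\ref{lemmaExpandVertex} sidesteps this by duplicating exactly the subgraph $H$ whose role is to ``close that cycle,'' and the duplicated $H$ is precisely what the $b$-side of the double cover supplies, while the new attaching edge plays the role of the crossed edge $e_{ab}$. Spotting this alignment---that vertex expansion on the base mirrors the structural change effected by the sign flip on the double refinement---is the substantive content of the argument; given the correspondence, the conclusion follows directly from Corollary~\ref{corStrictMono}.
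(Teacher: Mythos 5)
Your argument is correct and follows essentially the same route as the paper: expand $G(M)$ at one endvertex of the flipped edge along that edge (Lemma~\ref{lemmaExpandVertex}) to obtain $K$ with $\lambda(K)=\lambda(G(M))$, exhibit $K$ as a proper subgraph of the connected graph $G(M_{\+\negsub}^{\!\times2})$ with the new attaching edge realised by one of the crossed edges, and conclude via Corollary~\ref{corStrictMono}. Your explicit verification that $G(M_{\+\negsub}^{\!\times2})$ is connected and that the embedding misses $r_{i_0}^b$ makes precise what the paper leaves to its figure.
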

} 

In order to do this, we need to consider the structure of $G(\drm)$.
The graph $G(\drm)$ can be constructed from $G(M)$ as follows: 
If $G(M)$ has vertex set $\{v_1,\ldots,v_n\}$, then we let $G(\drm)$ have vertices $v_1,\ldots,v_n$ and $v'_1,\ldots,v'_n$.
If $v_iv_j$ is a positive edge in $G(M)$, then in $G(\drm)$ we add an edge between $v_i$ and $v_j$ and also between $v'_i$ and $v'_j$.
On the other hand, if $v_iv_j$ is a negative edge in $G(M)$, then in $G(\drm)$ we join $v_i$ to $v'_j$ and $v'_i$ to $v_j$.
The correctness of this construction follows directly from the definitions of double refinement and of the row-column graph of a matrix.
For an illustration, compare the graph in Figure~\ref{figDoubleRefinement} against that in Figure~\ref{figGeomClass}.

Note that if $v_1,\ldots,v_k$ is an even
$k$-cycle in $G(M)$, then $G(\drm)$ contains
two vertex-disjoint even $k$-cycles, the union of whose vertices is $\{v_1,\ldots,v_k,v'_1,\ldots,v'_k\}$.
In contrast, if $v_1,\ldots,v_\ell$ is an odd
$\ell$-cycle in $G(M)$, then $G(\drm)$ contains a $2\+\ell$-cycle on $\{v_1,\ldots,v_\ell,v'_1,\ldots,v'_\ell\}$ in which $v_i$ is opposite $v'_i$ (i.e. $v'_i$ is at distance $\ell$ from $v_i$ around the cycle) for each $i$, $1\leqslant i\leqslant\ell$.
We make the following additional observations:
\begin{obs}\label{obsEvenx2} 
  If $G(M)$ has no odd cycles, then $G(\drm)=G(M)+ G(M)$.
\end{obs}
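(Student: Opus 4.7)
The observation asserts that when $G(M)$ carries no unbalanced cycle, the doubled graph $G(\drm)$ splits into two vertex-disjoint copies of $G(M)$. My plan is to produce a $2$-colouring of the vertices of $G(M)$ that encodes the edge signs (the classical Harary balance criterion) and then use this colouring to exhibit the decomposition of $G(\drm)$ coming from its explicit construction.

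First, I would invoke Lemma~\ref{lemmaNegativeCycles}: since $G(M)$ has no odd (i.e.\ negative) cycles, $\Lambda_M$ admits a consistent orientation. Such an orientation picks a direction for each row (up or down) and each column (left or right); call these choices $\sigma(r_j),\sigma(c_i)\in\{+,-\}$. Reading off the convention for the oblique segments $L_{i,j}$, an entry $M_{i,j}=+1$ forces $\sigma(r_j)=\sigma(c_i)$, whereas $M_{i,j}=-1$ forces $\sigma(r_j)\neq\sigma(c_i)$. Thus $\sigma$ is a signing of $V(G(M))$ in which positive edges join equally-signed vertices and negative edges join oppositely-signed ones. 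Let $A=\sigma^{-1}(+)$ and $B=\sigma^{-1}(-)$.

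Next, I would use the bipartition $V(G(M))=A\sqcup B$ to split $V(G(\drm))$. Define
\[
U \;=\; \{v_i : v_i\in A\}\cup\{v'_i : v_i\in B\}, \qquad
U' \;=\; V(G(\drm))\setminus U.
\]
Recall from the construction of $G(\drm)$ described just before the observation that each edge $v_iv_j$ of $G(M)$ contributes exactly two edges to $G(\drm)$. A short case analysis on the sign of $v_iv_j$ and on which of $A,B$ its endpoints lie in shows that in every case one of these two edges lies inside $U$ and the other inside $U'$. For instance, a positive edge with $v_i,v_j\in A$ contributes $v_iv_j$ to $U$ and $v'_iv'_j$ to $U'$, while a negative edge with $v_i\in A$ and $v_j\in B$ contributes $v_iv'_j$ (both endpoints in $U$) and $v'_iv_j$ (both in $U'$). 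Consequently $G(\drm)=G(\drm)[U]\,+\,G(\drm)[U']$.

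Finally, the map $U\to V(G(M))$ that sends $v_i\mapsto v_i$ for $v_i\in A$ and $v'_i\mapsto v_i$ for $v_i\in B$ is a bijection, and the same case check shows that it carries the edge set of $G(\drm)[U]$ onto the edge set of $G(M)$; the analogous map witnesses $G(\drm)[U']\cong G(M)$. Hence $G(\drm)=G(M)+G(M)$, as claimed. The only genuinely non-mechanical step is the translation between ``consistent orientation'' (the language used in the paper) and the Harary-style signing of vertices; once the signing is in hand, the rest is a routine case-by-case verification.
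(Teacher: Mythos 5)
Your proof is correct, and you have rightly decoded the paper's slightly nonstandard terminology: here an ``odd cycle'' is one whose edge-label product is $-1$ (equivalently, one containing an odd number of negative edges), not a cycle of odd length --- the row-column graph is bipartite, so the latter reading would make the hypothesis vacuous and the claim false. The paper offers no explicit proof: the observation is meant to follow from the immediately preceding description of $G(\drm)$ (positive edges lift to a ``parallel'' pair $v_iv_j$, $v'_iv'_j$; negative edges to a ``crossing'' pair $v_iv'_j$, $v'_iv_j$) together with the note that positive cycles lift to two disjoint cycles; the reader is implicitly expected to supply something like a spanning-tree argument, lifting a tree to two disjoint copies and checking that each non-tree edge stays within a copy precisely because its fundamental cycle is positive. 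You instead produce an explicit Harary-style balance certificate: a $\{+,-\}$-colouring of $V(G(M))$ under which positive edges join like signs and negative edges join unlike signs, extracted from a consistent orientation of $\Lambda_M$ via Lemma~\ref{lemmaNegativeCycles} (your reading of the geometry of the segments $L_{i,j}$ --- rightwards iff upwards for slope $+1$, rightwards iff downwards for slope $-1$ --- is accurate). Once the colouring is in hand, the split of $V(G(\drm))$ into $U$ and $U'$ and the edge-by-edge check are exactly what is needed, and the isomorphisms with $G(M)$ are immediate. This is a genuinely complete argument where the paper has essentially none, and reusing Lemma~\ref{lemmaNegativeCycles} to obtain the colouring is an economical way to avoid reproving balance theory from scratch; the only cosmetic omission is that isolated vertices (all-zero rows or columns) receive no constraint from the orientation and may be coloured arbitrarily, which does no harm.
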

\begin{obs}\label{obsOddx2}
  If $G(M)$ is connected and has an odd cycle, then $G(\drm)$ is connected.
\end{obs}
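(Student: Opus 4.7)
The plan is to exploit the explicit construction of $G(\drm)$ from $G(M)$ given just above, together with two observations: (i) any odd $\ell$-cycle of $G(M)$ lifts to a single $2\ell$-cycle in $G(\drm)$ containing both $v_i$ and $v'_i$ for each of its vertices (as already noted in the paragraph preceding the statement); and (ii) traversing an edge of $G(M)$ corresponds, after lifting, to either staying within the same ``copy'' of the vertex set (for a positive edge of $G(M)$) or switching between the two copies (for a negative edge of $G(M)$).

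First, I would fix an odd cycle $C$ in $G(M)$, which exists by hypothesis, and I would let $K$ denote the connected component of $G(\drm)$ containing the lifted $2\ell$-cycle. By observation (i), $K$ contains both $v$ and $v'$ for every vertex $v$ on $C$; in particular, it meets both ``copies'' of the vertex set.

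Next, for any remaining vertex $v_k$ of $G(M)$, I would use connectivity of $G(M)$ to obtain a walk $v_k = u_0, u_1, \ldots, u_m$ in $G(M)$ ending at some vertex of $C$, and then lift this walk to $G(\drm)$ starting at $v_k$: at each step one follows the parallel edge if the corresponding edge of $G(M)$ is positive, and the crossing edge if it is negative. By observation (ii) this produces a legitimate walk in $G(\drm)$ terminating at either $u_m$ or $u'_m$, both of which lie in $K$; hence $v_k \in K$. The analogous lift starting at $v'_k$ places $v'_k$ in $K$ as well.

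The conclusion that every vertex of $G(\drm)$ lies in $K$ is then immediate, establishing connectivity. The argument is short, and the only point requiring any care is verifying that the prescribed lifted walks actually exist in $G(\drm)$, but this will follow directly from the construction of the double-refinement graph, so I do not expect a serious obstacle; the real content of the statement is packaged into the odd-cycle-lifting observation (i), which has already been recorded.
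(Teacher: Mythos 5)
Your argument is correct and matches the paper's intent: the paper states this as an unproved observation, relying on exactly the construction of $G(\drm)$ and the preceding remark that an odd cycle lifts to a single cycle through both copies of each of its vertices, and your walk-lifting argument is the natural way to make that explicit. No gaps.
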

We now have all we require to prove our cycle parity result.

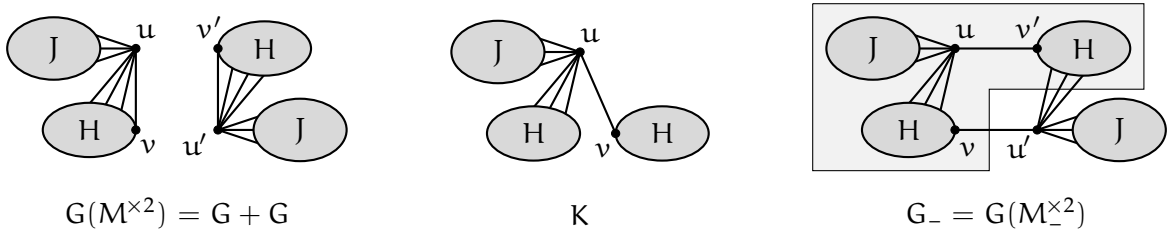
\begin{figure}[ht] 
$$
\begin{tikzpicture}[scale=0.36]
    \draw [thick] (0,3)--(-2.5,0);
    \draw [thick] (0,3)--(-1.6,0);
    \draw [thick] (0,3)--(-0.7,0);
    \draw [thick] (0,3)--(-3,4);
    \draw [thick] (0,3)--(-3,3);
    \draw [thick] (0,3)--(-3,2);
    \draw [fill=gray!30!white,thick] (-1.7,0) circle [x radius=1.7, y radius=1];
    \node[]at(-1.7,0){$H$};
    \draw [fill=gray!30!white,thick] (-3,3) circle [x radius=1.7, y radius=1.15];
    \node[]at(-3,3){$J$};
    \draw [thick] (0,0)--(0,3);
    \draw [fill] (0,3) circle [radius=0.15];
    \draw [fill] (0,0) circle [radius=0.15];
    \node[above right]at(0,3){$\!\!u$};
    \node[below right]at(0,0){$\!v$};
    \draw [thick] (3,0)--(3.7,3);
    \draw [thick] (3,0)--(4.6,3);
    \draw [thick] (3,0)--(5.5,3);
    \draw [thick] (3,0)--(6,1);
    \draw [thick] (3,0)--(6,0);
    \draw [thick] (3,0)--(6,-1);
    \draw [fill=gray!30!white,thick] (4.7,3) circle [x radius=1.7, y radius=1];
    \node[]at(4.7,3){$H$};
    \draw [fill=gray!30!white,thick] (6,0) circle [x radius=1.7, y radius=1.15];
    \node[]at(6,0){$J$};
    \draw [thick] (3,0)--(3,3);
    \draw [fill] (3,3) circle [radius=0.15];
    \draw [fill] (3,0) circle [radius=0.15];
    \node[above left]at(3,3){$v'\!\!\!$};
    \node[below left]at(3,.27){$u\!'$};
    \node[]at(1.5,-3.1){$G(\drm)\+=\+G+ G$};
\end{tikzpicture}
\qquad\quad\;\:
\begin{tikzpicture}[scale=0.36]
    \draw [thick] (0,3)--(-2.5,0);
    \draw [thick] (0,3)--(-1.6,0);
    \draw [thick] (0,3)--(-0.7,0);
    \draw [thick] (0,3)--(-3,4);
    \draw [thick] (0,3)--(-3,3);
    \draw [thick] (0,3)--(-3,2);
    \draw [fill=gray!30!white,thick] (-1.7,0) circle [x radius=1.7, y radius=1];
    \node[]at(-1.7,0){$H$};
    \draw [fill=gray!30!white,thick] (3,0) circle [x radius=1.7, y radius=1];
    \node[]at(3,0){$H$};
    \draw [fill=gray!30!white,thick] (-3,3) circle [x radius=1.7, y radius=1.15];
    \node[]at(-3,3){$J$};
    \draw [thick] (0,3)--(1.3,0);
    \draw [fill] (0,3) circle [radius=0.15];
    \draw [fill] (1.3,0) circle [radius=0.15];
    \node[above right]at(0,3){$\!\!u$};
    \node[below left]at(1.3,0){$v\!$};
    \node[]at(0,-3.1){$\phantom{G_1}K\phantom{G_1}$};
\end{tikzpicture}
\qquad\quad\;\:
\begin{tikzpicture}[scale=0.36]
    \draw [fill=gray!10!white] (-5.2,4.65)--(6.9,4.65)--(6.9,1.5)--(1.25,1.5)--(1.25,-1.5)--(-5.2,-1.5)--(-5.2,4.65);
    \draw [thick] (0,3)--(-2.5,0);
    \draw [thick] (0,3)--(-1.6,0);
    \draw [thick] (0,3)--(-0.7,0);
    \draw [thick] (0,3)--(-3,4);
    \draw [thick] (0,3)--(-3,3);
    \draw [thick] (0,3)--(-3,2);
    \draw [fill=gray!30!white,thick] (-1.7,0) circle [x radius=1.7, y radius=1];
    \node[]at(-1.7,0){$H$};
    \draw [fill=gray!30!white,thick] (-3,3) circle [x radius=1.7, y radius=1.15];
    \node[]at(-3,3){$J$};
    \draw [thick] (0,0)--(3,0);
    \draw [fill] (0,3) circle [radius=0.15];
    \draw [fill] (0,0) circle [radius=0.15];
    \node[above right]at(0,3){$\!\!u$};
    \node[below right]at(0,-0.1){$\!v$};
    \draw [thick] (3,0)--(3.7,3);
    \draw [thick] (3,0)--(4.6,3);
    \draw [thick] (3,0)--(5.5,3);
    \draw [thick] (3,0)--(6,1);
    \draw [thick] (3,0)--(6,0);
    \draw [thick] (3,0)--(6,-1);
    \draw [fill=gray!30!white,thick] (4.7,3) circle [x radius=1.7, y radius=1];
    \node[]at(4.7,3){$H$};
    \draw [fill=gray!30!white,thick] (6,0) circle [x radius=1.7, y radius=1.15];
    \node[]at(6,0){$J$};
    \draw [thick] (0,3)--(3,3);
    \draw [fill] (3,3) circle [radius=0.15];
    \draw [fill] (3,0) circle [radius=0.15];
    \node[above left]at(3,3){$v'\!\!\!$};
    \node[below left]at(3,.17){$u\!'\!$};
    \node[]at(1.5,-3.1){$G_{\+\negsub}\+=\+G(M_{\+\negsub}^{\!\times2})$};
\end{tikzpicture}
\vspace{-10.5pt}
$$
\caption{Graphs used in the proof of Corollary~\ref{corNegateCell}}
\label{figNegCycle}
\end{figure}
\begin{proof}[Proof of Corollary~\ref{corNegateCell}]
Let $G=G(M)$ and $G_{\+\negsub}=G(M_{\+\negsub}^{\!\times2})$, and let
$uv$ be the edge in $G$ corresponding to the entry in $M$ that is negated to create $M_{\+\negsub}$.
Since $G$ 
contains no negative cycles, by Observation~\ref{obsEvenx2},
$G(\drm)=G+G$.
Thus,
since $G$ is connected, it has the form at the left of Figure~\ref{figNegCycle}, in which $H$ is the component of $G-u$ containing $v$.
Moreover,
we have
$\gr(\Geom(M))=\lambda(G)$.
(This also follows from Corollary~\ref{corNoNegCycles}.)
Now, if we expand $G$ at $u$ along $uv$, by Lemma~\ref{lemmaExpandVertex}, 
$\lambda(G)=\lambda(K)$, where $K$ is the graph in the centre of Figure~\ref{figNegCycle}.

On the other hand, $G_{\+\negsub}$ is obtained from $G(\drm)$ by removing the edges $uv$ and $u'v'$, and adding $uv'$ and $u'v$, as shown at the
right of Figure~\ref{figNegCycle}.
It is readily observed that $K$ is a proper subgraph of $G_{\+\negsub}$ (see the shaded box in Figure~\ref{figNegCycle}), and hence, by Corollary~\ref{corStrictMono}, $\lambda(K)<\lambda(G_{\+\negsub})$. Since $\gr(\Geom(M_{\+\negsub}))=\lambda(G_{\+\negsub})$, the result follows.
\end{proof}
Thus, making the first negative cycle increases the growth rate.
We suspect, in fact,
that the following stronger statement is also true:

\thmbox{
\begin{conj}\label{conjNegateCell}
If $G(M)$ is connected and $M_{\+\negsub}$ results from negating a single entry of $M$
that is in one or more positive cycles but in no negative cycle,
then
$\gr(\Geom(M_{\+\negsub})) > \gr(\Geom(M))$.
\end{conj}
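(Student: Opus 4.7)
The plan is to adapt the proof of Corollary~\ref{corNegateCell}, relying on the observation that the double-refinement row-column graphs $G_0 := G(\drm)$ and $G_1 := G(M_{\+\negsub}^{\!\times2})$ share a large common subgraph. Write $G = G(M)$, assume without loss of generality that the negated edge $uv$ is positive in $G$, and set $H = G_0 \deledge \{uv, u'v'\} = G_1 \deledge \{uv', u'v\}$. Then $G_0 = H \cup \{uv, u'v'\}$ and $G_1 = H \cup \{uv', u'v\}$, and $H$ itself is the double cover of the signed graph $G-uv$. If $G$ has no negative cycle the result is exactly Corollary~\ref{corNegateCell}, so I would then assume $G$ has a negative cycle; by hypothesis any such cycle avoids $uv$, hence it persists in $G-uv$, and so $H$, $G_0$, and $G_1$ are all connected.

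Next, I would apply Lemma~\ref{lemmaExpandVertex} to $G_0$ at $u$ along $uv$---valid, since any positive cycle of $G$ through $uv$ lifts to a cycle of $G_0$ through $uv$---to obtain a graph $K$ with $\lambda(K) = \lambda(G_0)$, constructed by adjoining to $G_0 \deledge uv$ a fresh copy of the component of $G_0 - u$ that contains $v$, linked to $u$ via a new edge. The aim is then to embed $K$ as a proper subgraph of $G_1$, or more likely of the graph $K'$ that results from analogously expanding $G_1$ at $u$ along $uv'$ (so $\lambda(K') = \lambda(G_1)$); combining this with Corollary~\ref{corStrictMono} would give $\lambda(G_0) = \lambda(K) < \lambda(K') = \lambda(G_1)$, from which the strict inequality of growth rates follows via Theorem~\ref{thmGeomClassGrowthRate}.

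The hard part will be constructing this embedding. In Corollary~\ref{corNegateCell}, the form $G_0 = G + G$ provides a ``spare copy'' of $G$ into which the duplicated component from the expansion slots naturally, with the crossed edges of $G_1$ supplying exactly the right linkage. Here, by contrast, the negative cycles of $G-uv$ weave the primed and unprimed halves of $G_0$ into a single connected object, and a direct vertex count shows that the expanded $K$ contains more vertices than $G_1$ itself, so the obvious copy-against-copy matching is not available. A plausible route is first to apply signed-graph switchings at vertices of $G$ near $u$ and $v$, reducing to the situation in which the block of $G$ containing $uv$ is entirely positive---the remaining negative cycles then lie in blocks disjoint from $uv$ and can be treated as passive decorations that embed identically into both expansions. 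An alternative, more analytic strategy is a moment-counting argument based on Godsil's identification of the moments of $\mu_G(z)$ with counts of tree-like closed walks on $G$: one would try to show that the swap $\{uv,u'v'\}\to\{uv',u'v\}$ strictly creates new such walks in $G_1$---specifically, walks that traverse the newly negative cycle through $uv'$ and $u'v$---enough to force $\lambda(G_1)>\lambda(G_0)$.
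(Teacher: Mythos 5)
This statement is labelled as a \emph{conjecture} in the paper, not a theorem: immediately after stating it, the author writes that ``to prove this more general result seems to require some new ideas,'' and explicitly identifies the obstruction that when $G(M)$ already contains a negative cycle, $G(\drm)$ is connected and there is ``no obvious way to generate a subgraph of $G(M_{\+\negsub}^{\!\times2})$ by expanding $G(\drm)$.'' So there is no paper proof to compare against; the result is left open.

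Your proposal is, to its credit, honest about this. You correctly reproduce the setup (expansion via Lemma~\ref{lemmaExpandVertex}, then hoping for a proper-subgraph embedding plus Corollary~\ref{corStrictMono}), and you independently arrive at the same diagnosis of why the Corollary~\ref{corNegateCell} argument stalls: once $G$ has a negative cycle, $G_0 = G(\drm)$ is a single connected double cover rather than $G+G$, so the expanded graph $K$ has more vertices than $G_1 = G(M_{\+\negsub}^{\!\times2})$ and the naive copy-against-copy embedding is unavailable. That matches the paper's own stated obstruction.

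What you offer beyond that, however, is not a proof but two sketches of directions, neither carried through. The switching idea rests on the observation that if $uv$ lies in no negative cycle then the block containing $uv$ can be switched to be entirely positive (this is true, since in a $2$-connected graph the cycles through a fixed edge span the cycle space, and cycle parity is switching-invariant). But this alone does not restore the structure $G(\drm)=G+G$: as long as \emph{any} block of $G$ carries a negative cycle, $G(\drm)$ remains connected, and you have not said how the ``passive decoration'' blocks are to be reconciled with the vertex-count mismatch you yourself flagged. The moment-counting idea via tree-like walks is even more speculative; showing that the swap $\{uv,u'v'\}\to\{uv',u'v\}$ strictly increases $\lambda$ would require a genuinely new comparison argument for tree-like walk counts, and you give no indication of how to rule out cancellations. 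In short, your write-up is a correct \emph{analysis of why the problem is hard}, consistent with the paper's own remarks, but it does not constitute a proof of the conjecture.
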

} 

To prove this more general result seems to require some new ideas.
If $G(M)$ already contains a negative cycle, then $G(\drm)$ is connected, and, when this is the case,
there appears to be no obvious way to generate a subgraph of $G(M_{\+\negsub}^{\!\times2})$
by expanding $G(\drm)$.

\subsection{Monotone grid classes}

In a recent paper~\cite{Bevan2013}, we established the growth rates of \emph{monotone} grid classes. If $M$ is a \opmo{} matrix, then the \emph{monotone grid class} $\Grid(M)$ consists of those permutations that can be plotted as a subset of some figure consisting of the union of \emph{any monotonic curves} $\Gamma_{i,j}$ with the same endpoints as the $L_{i,j}$ in $\Lambda_M$.
This permits greater flexibility in the positioning of points in the cells, so
$\Geom(M)$ is a subset of $\Grid(M)$
and we have $\gr(\Geom(M))\leqslant\gr(\Grid(M))$.
In fact, the geometric grid class
$\Geom(M)$ and the monotone grid class $\Grid(M)$
are identical
if
and only if
$G(M)$
is acyclic 
(Theorem~3.2 in~\cite{AABRV2011}).
Hence, if $G(M)$ is a forest, 
$\gr(\Geom(M))=\gr(\Grid(M))$.
We determined
in~\cite{Bevan2013}
that
the growth rate of monotone grid class $\Grid(M)$ is equal to the square of the spectral radius of $G(M)$.
For acyclic $G(M)$, this is consistent with
the growth rate of the geometric grid class as given by
Corollary~\ref{corForestRho}.

Typically,
the growth rate of a monotone grid class will be greater than
that
of the corresponding geometric grid class.
For example,
if $G(M)$ is a cycle then $\gr(\Grid(M))=4$, whereas from~\eqref{eqGRGeomCycle} we have $\gr(\Geom(M))<4$.
And we have
$$
\gr\!\left(\begin{gridmx}1&\pos0&-1\\ 1&-1&\pos1 \end{gridmx}\right)
\;=\;
\gr\!\left(\begin{gridmx}-1&\pos0&-1\\ \pos1&-1&\pos1 \end{gridmx}\right)
\;=\;
\thalf(5+\sqrt{17})
\;\approx\;
4.56155,
$$
which should be compared with~\eqref{eqGRGeomExNegCyc} and~\eqref{eqGRGeomExPosCyc}.

The fact that the growth rate of the monotone grid class is strictly greater
is a consequence of the fact that, if $G$ is connected and not acyclic, then 
$\lambda(G)$ and $\rho(G)$ are distinct:
\begin{lemma}[Godsil \& Gutman~\cite{GG1981}] 
\label{lemmaLambdaLessRhoCyclic}
If 
$G$ is connected and contains a cycle, then
$\lambda(G)<\rho(G)$.
\end{lemma}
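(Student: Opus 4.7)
The plan is to use the Godsil--Gutman identity of Lemma~\ref{lemmaMuCycleDef} to compare $\mu_G$ and $\Phi_G$ at the single point $z=\rho(G)$. Rewriting that identity gives
$$\mu_G(z) \;=\; \Phi_G(z) \:+\: \sum_{C\in\CCC_G} \!\!2^{\+\mathrm{comp}(C)}\+\Phi_{G-C}(z),$$
and since $\Phi_G(\rho(G))=0$, substituting $z=\rho(G)$ yields
$$\mu_G(\rho(G)) \;=\; \sum_{C\in\CCC_G} \!\!2^{\+\mathrm{comp}(C)}\+\Phi_{G-C}(\rho(G)).$$
The hypothesis that $G$ contains a cycle ensures that $\CCC_G\neq\emptyset$, so this sum is non-vacuous.

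Next I would show that every summand on the right is strictly positive. Fix $C\in\CCC_G$. If $C=G$ (which can occur only when $G$ is itself a single cycle, since $G$ is connected and any 2-regular graph is a disjoint union of cycles), then by the convention $\Phi_{G-C}\equiv 1$ stated in Lemma~\ref{lemmaMuCycleDef} the summand equals $2^{\+\mathrm{comp}(C)}>0$. Otherwise $G-C$ is a proper subgraph of the connected graph $G$, so Lemma~\ref{lemmaStrictMono} gives $\rho(G-C)<\rho(G)$. Since $\Phi_{G-C}$ is a monic polynomial of positive degree whose largest real root is $\rho(G-C)$, it is strictly positive on $(\rho(G-C),\infty)$; in particular $\Phi_{G-C}(\rho(G))>0$. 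Summing over $C\in\CCC_G$ then gives $\mu_G(\rho(G))>0$.

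To finish, I would note that $\mu_G$ is itself a monic polynomial and $\lambda(G)$ is, by definition, its largest real root; hence $\mu_G(z)>0$ for every real $z>\lambda(G)$. Combined with $\mu_G(\rho(G))>0$, this forces $\rho(G)>\lambda(G)$, which is the desired conclusion.

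I do not foresee a serious obstacle here. Once one spots that the Godsil--Gutman expansion isolates the ``cycle correction'' terms and that strict Perron--Frobenius monotonicity (Lemma~\ref{lemmaStrictMono}) renders each such correction positive at $z=\rho(G)$, the rest of the argument writes itself. The only slightly delicate point is the edge case in which some $C\in\CCC_G$ exhausts all of $V(G)$, and it is absorbed by the convention $\Phi_{G-C}=1$ built into the statement of Lemma~\ref{lemmaMuCycleDef}.
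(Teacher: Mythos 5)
Your strategy is the same as the paper's (apply the Godsil--Gutman identity of Lemma~\ref{lemmaMuCycleDef} together with the strict monotonicity of Lemma~\ref{lemmaStrictMono}), and the body of the argument is sound. However, the final inference has a genuine logical gap. You establish that $\mu_G(z)>0$ for every $z>\lambda(G)$ and that $\mu_G(\rho(G))>0$, and then conclude $\rho(G)>\lambda(G)$. That is affirming the consequent: ``$z>\lambda(G)\Rightarrow\mu_G(z)>0$'' together with ``$\mu_G(\rho(G))>0$'' does not yield ``$\rho(G)>\lambda(G)$''. A monic polynomial can be positive at a point lying strictly between two of its real roots, so positivity at the single point $z=\rho(G)$ is not, by itself, enough to place $\rho(G)$ to the right of $\lambda(G)$. (Your argument would go through if one already knew the weak inequality $\lambda(G)\leqslant\rho(G)$, but you do not invoke that.)

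The repair is short and is exactly what the paper does: do not evaluate only at $z=\rho(G)$, but on the whole ray. You already observe that $\Phi_{G-C}$ is strictly positive on $(\rho(G-C),\infty)\supseteq[\rho(G),\infty)$; note also that $\Phi_G(z)\geqslant0$ for $z\geqslant\rho(G)$. Substituting into the identity then gives $\mu_G(z)>0$ for \emph{all} $z\geqslant\rho(G)$. Since $\lambda(G)$ is a root of $\mu_G$, it cannot lie in $[\rho(G),\infty)$, so $\lambda(G)<\rho(G)$.
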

\begin{proof} 
By Lemma~\ref{lemmaStrictMono},
if $C$ is a nonempty subgraph of $G$, then $\rho(G-C)<\rho(G)$.
So we have $\Phi_{G-C}(z)>0$ for all $z\geqslant\rho(G)$.
Moreover, $\Phi_G(z)\geqslant0$ for $z\geqslant\rho(G)$. So,
since $G$ contains a cycle, from Lemma~\ref{lemmaMuCycleDef} we can deduce that $\mu_G(z)>0$ if $z\geqslant\rho(G)$, and thus
$\lambda(G)<\rho(G)$.
\end{proof}
Note that,
analogously to Observation~\ref{obsGeomDouble},
$\Grid(\drm)=\Grid(M)$.
Hence it must be the case that $\rho(G(\drm))=\rho(G(M))$, the growth rate of a monotone grid class thus being independent of the parity of its cycles.
As a consequence, from Lemma~\ref{lemmaLambdaLessRhoCyclic} we can deduce that in the non-acyclic case
there is a strict inequality between the growth rate of a geometric grid class and the growth rate of the corresponding monotone grid class:

\thmbox{
\begin{cor}
  If $G(M)$ is connected,
  then $\gr(\Geom(M))<\gr(\Grid(M))$ 
  if and only if $G(M)$ contains a cycle.
\end{cor}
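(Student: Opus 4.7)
The plan is to translate the inequality about growth rates into one about $\lambda$ and $\rho$ of the graph $G(\drm)$, and then handle the two directions of the biconditional separately using the lemmas already assembled.

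First I would note that, by the result of~\cite{Bevan2013} together with the identity $\Grid(\drm)=\Grid(M)$, we have $\gr(\Grid(M))=\rho(G(M))^2=\rho(G(\drm))^2$. Combined with Theorem~\ref{thmGeomClassGrowthRate}, which gives $\gr(\Geom(M))=\lambda(G(\drm))^2$, the inequality $\gr(\Geom(M))<\gr(\Grid(M))$ is equivalent to $\lambda(G(\drm))<\rho(G(\drm))$. So the corollary reduces to showing that, when $G(M)$ is connected, $\lambda(G(\drm))<\rho(G(\drm))$ holds if and only if $G(M)$ contains a cycle.

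For the ``only if'' direction, suppose $G(M)$ is acyclic. Since $G(M)$ is connected, it is a tree, hence has no odd cycles, so by Observation~\ref{obsEvenx2} we have $G(\drm)=G(M)+G(M)$. Then $G(\drm)$ is itself a forest, and Corollary~\ref{corLambdaRhoTree} gives $\lambda(G(\drm))=\rho(G(\drm))$ (equivalently, one may appeal directly to Corollary~\ref{corForestRho} to get $\gr(\Geom(M))=\rho(G(M))^2=\gr(\Grid(M))$).

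For the ``if'' direction, suppose $G(M)$ contains a cycle. I would split into two cases based on the parity structure of its cycles. If $G(M)$ has no odd cycle, then by Observation~\ref{obsEvenx2}, $G(\drm)=G(M)+G(M)$, and each copy is a connected graph containing a cycle; by Lemma~\ref{lemmaLambdaLessRhoCyclic}, $\lambda(G(M))<\rho(G(M))$, and taking the maximum over the two components (Corollary~\ref{corLambdaMaxComponents}) yields $\lambda(G(\drm))=\lambda(G(M))<\rho(G(M))=\rho(G(\drm))$. If instead $G(M)$ has an odd cycle, then by Observation~\ref{obsOddx2}, $G(\drm)$ is connected, and the doubled odd cycle observed in the excerpt provides a cycle in $G(\drm)$; Lemma~\ref{lemmaLambdaLessRhoCyclic} again gives $\lambda(G(\drm))<\rho(G(\drm))$. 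Squaring preserves strict inequality, completing the argument.

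The proof is essentially a bookkeeping assembly of facts already at hand, so I do not anticipate a genuine obstacle; the only subtlety is being careful to use $G(\drm)$ (rather than $G(M)$) throughout when invoking Lemma~\ref{lemmaLambdaLessRhoCyclic}, since in the presence of negative cycles $G(M)$ alone is not the right object, and to handle the two parity cases of Observations~\ref{obsEvenx2} and~\ref{obsOddx2} separately to ensure that the connectivity hypothesis of Lemma~\ref{lemmaLambdaLessRhoCyclic} is satisfied in each.
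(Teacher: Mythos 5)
Your proof is correct and follows essentially the same route the paper takes implicitly in the surrounding discussion: reduce to comparing $\lambda(G(\drm))$ with $\rho(G(\drm))=\rho(G(M))$, use Corollary~\ref{corLambdaRhoTree} (or the identity $\Geom(M)=\Grid(M)$) in the acyclic case, and Lemma~\ref{lemmaLambdaLessRhoCyclic} in the cyclic case. Your explicit case split on Observations~\ref{obsEvenx2} and~\ref{obsOddx2} to guarantee the connectivity hypothesis of Lemma~\ref{lemmaLambdaLessRhoCyclic} is a detail the paper glosses over, and it is handled correctly.
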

} 

\subsection{Subdivision of edges}

One surprising result in~\cite{Bevan2013} concerning the growth rates of monotone grid classes is the fact that classes
whose row-column graphs
have longer internal paths or cycles exhibit \emph{lower} growth rates.
An edge $e$ of a graph $G$ is said to lie on an \emph{endpath} of $G$ if $G\deledge e$ is disconnected and one of its components is a (possibly trivial) path. An edge that does not lie on an endpath is said to be \emph{internal}.
The following result of Hoffman \& Smith states that the subdivision of an edge increases or decreases the spectral radius of the graph depending on whether the edge lies on an endpath or is internal:
\begin{lemma}[Hoffman \& Smith~\cite{HS1975}]\label{lemmaRhoSubdivision}
  Let $G$ be a connected graph and $G'$ be obtained from $G$ by subdividing an edge $e$. If $e$ lies on an endpath, then $\rho(G')>\rho(G)$. Otherwise (if $e$ is an internal edge), $\rho(G')\leqslant\rho(G)$, with equality if and only if $G$ is a cycle or has the following form (which we call an ``$H$~graph''):
$$
  \begin{tikzpicture}[scale=0.4]
    \draw [thick,dashed] (0,0)--(4,0);
    \draw [thick] (-.5,.866)--(0,0)--(-.5,-.866);
    \draw [thick] (4.5,.866)--(4,0)--(4.5,-.866);
    \draw [fill] (0,0) circle [radius=0.15];
    \draw [fill] (4,0) circle [radius=0.15];
    \draw [fill] (-.5,.866) circle [radius=0.15];
    \draw [fill] (-.5,-.866) circle [radius=0.15];
    \draw [fill] (4.5,.866) circle [radius=0.15];
    \draw [fill] (4.5,-.866) circle [radius=0.15];
  \end{tikzpicture}
$$
\end{lemma}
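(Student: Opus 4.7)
My plan is to handle the two cases of the lemma separately, using Perron--Frobenius positivity throughout.

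For the endpath case, suppose $e=uv$ lies on an endpath, with $v$ an endpoint of a path component $P=v\+v_1\cdots v_\ell$ of $G\deledge e$. Subdividing $e$ by a new degree-$2$ vertex $w$ produces a graph $G'$ in which the pendant structure at $u$---the body $G\deledge e-V(P)$ plus the path $u\+w\+v\+v_1\cdots v_\ell$---is isomorphic to the pendant structure obtained from $G$ by attaching a single new leaf at $v_\ell$. Call this latter graph $G^+$. Since $G^+$ is connected and strictly contains $G$ as a subgraph, Lemma~\ref{lemmaStrictMono} gives $\rho(G)<\rho(G^+)=\rho(G')$.

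For the internal case, let $x'$ be the positive Perron eigenvector of $G'$ with eigenvalue $\rho'=\rho(G')$. Taking its restriction $y$ to $V(G)=V(G')\setminus\{w\}$ as a test vector for the Rayleigh quotient on $G$, and using the eigenvalue equation $\rho'\+x'(w)=x'(u)+x'(v)$ at $w$, a direct calculation yields
\[
  y^T A_G\+ y-\rho'\+y^T y
  \;=\; 2\+x'(u)\+x'(v)-\rho'\+x'(w)^2
  \;=\; \frac{2\+\rho'\+x'(u)\+x'(v)-(x'(u)+x'(v))^2}{\rho'}.
\]
By the variational principle $\rho(G)\geqslant y^T A_G y/y^T y$, so $\rho(G)\geqslant\rho(G')$ will follow once we show $2\+\rho'\+x'(u)\+x'(v)\geqslant(x'(u)+x'(v))^2$, equivalently $\rho'\geqslant 1+\tfrac12\bigl(x'(u)/x'(v)+x'(v)/x'(u)\bigr)$. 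Two ingredients close the argument. First, internality of $e$ forces $\rho'\geqslant 2$: either $e$ lies on a cycle of $G'$, or $G\deledge e$ has two components each containing a vertex of degree at least~$3$---in which case $G'$ contains a subdivision of the H-graph as a subgraph---and either way $\rho'\geqslant 2$ by subgraph monotonicity and direct computation of $\rho$ for cycles and the H-graph. Second, one must bound $x'(u)/x'(v)+x'(v)/x'(u)\leqslant 2(\rho'-1)$; this I would derive by iterating the Perron eigenvector equations along paths from $u$ and $v$ into the rest of $G'$ to exploit the graph's global structure.

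Equality in the chain above forces both $\rho'=2$ and $x'(u)=x'(v)$; Smith's classification of connected graphs with spectral radius~$2$ then pins $G$ down to a cycle or an H-graph. The main obstacle is the balance bound in the second ingredient---without symmetry, controlling the ratio $x'(u)/x'(v)$ is subtle. A possibly cleaner alternative is to work directly with characteristic polynomials: the expansion $\Phi_{G'}(\lambda)=\lambda\+\Phi_{G\deledge e}(\lambda)-\Phi_{G\deledge e-u}(\lambda)-\Phi_{G\deledge e-v}(\lambda)$ at the degree-$2$ vertex $w$, combined with a matching expansion of $\Phi_G$ around $e$ and evaluation at $\rho'$, should yield the sign of $\Phi_G(\rho')$ directly and bypass the Rayleigh estimate entirely.
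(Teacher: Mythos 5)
First, a point of comparison: the paper does not prove this lemma at all --- it is quoted verbatim from Hoffman \& Smith --- so your argument must stand on its own. Your endpath case does: identifying $G'$ with the graph $G^{+}$ obtained by lengthening the pendant path at $v_\ell$ by one leaf, and invoking the strict monotonicity of the spectral radius (Lemma~\ref{lemmaStrictMono}), settles that half completely and cleanly.

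The internal case, however, has a fatal gap. Everything rests on your ``second ingredient'', the balance bound $x'(u)/x'(v)+x'(v)/x'(u)\leqslant 2(\rho'-1)$, which you do not prove but only promise to derive ``by iterating the Perron eigenvector equations''. That bound is false in general, and with it the whole strategy collapses. Take $G$ to be the double star with $e=uv$, where $u$ carries ten pendant leaves and $v$ carries three; then $e$ is internal in the paper's sense, since both components of $G\setminus e$ contain a vertex of degree at least $3$. A direct computation for $G'$ gives $\rho'^2=(15+\sqrt{53})/2\approx 11.14$, so $\rho'\approx 3.338$, while $x'(u)/x'(v)=(\rho'^2-3)/(\rho'^2-10)\approx 7.14$; hence $x'(u)/x'(v)+x'(v)/x'(u)\approx 7.28$, far exceeding $2(\rho'-1)\approx 4.68$. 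Consequently $2\rho'x'(u)x'(v)<(x'(u)+x'(v))^2$, the quantity $y^{T}A_{G}\,y-\rho'\,y^{T}y$ is strictly negative, and the restricted Perron vector fails to certify $\rho(G)\geqslant\rho'$ --- even though the conclusion is true here, since $\rho(G)^2=7+\sqrt{19}\approx 11.36$. So this is not merely an unfinished estimate: the Rayleigh-quotient approach with this particular test vector cannot prove the internal case, and the equality analysis built on top of it inherits the failure. Your fallback via characteristic polynomials is much closer to arguments that actually work (and to Hoffman--Smith's own method), but as written it is only a sketch, and the deletion identity you state at the degree-two vertex $w$ omits the cycle terms of Schwenk's formula whenever $e$ lies on a cycle, which is precisely one of the cases you must handle.
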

Thus for monotone grid classes, if $G(M)$ is connected,
and $G(M')$ is obtained from $G(M)$ by the subdivision of one or more internal edges, then $\gr(\Grid(M'))\leqslant\gr(\Grid(M))$.

As we will see, the situation is not as simple for geometric grid classes.
The effect of edge subdivision on the largest root of the matching polynomial does not seem to have been addressed previously. 
In fact, the subdivision of an edge that is in a cycle may
cause $\lambda(G)$ 
to increase or decrease, or may leave it unchanged.
See Figures~\ref{figGeomSubdivisionIncr}--\ref{figGeomSubdivisionDecr} for illustrations of the three cases.
We investigate this further below.
However, if the edge being subdivided is not on a cycle in $G$, then the behaviour of $\lambda(G)$ mirrors that of $\rho(G)$, as we now demonstrate:
\begin{lemma}\label{lemmaSubdividing1}
  Let $G$ be a connected graph and $G'$ be obtained from $G$ by subdividing an edge $e$.
  If $e$ lies on an endpath, then $\lambda(G')>\lambda(G)$.
  However, if $e$ is an internal edge and not on a cycle, then $\lambda(G')\leqslant\lambda(G)$, with equality if and only if $G$ is an $H$ graph.
\end{lemma}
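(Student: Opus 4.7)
The approach is to reduce to Hoffman--Smith (Lemma~\ref{lemmaRhoSubdivision}) via the expansion operation of Lemma~\ref{lemmaExpandVertex}. If $G$ is already a forest, Corollary~\ref{corLambdaRhoTree} gives $\lambda(G) = \rho(G)$ and $\lambda(G') = \rho(G')$, and the claim follows directly from Hoffman--Smith (a tree is never a cycle, so the equality condition there reduces to the $H$-graph condition). Otherwise, since $e$ is on no cycle of $G$, it is a bridge: write $G\deledge e = A + B$ with $u \in A$ and $v \in B$. I~would fully expand $G$ via Lemma~\ref{lemmaExpandVertex}, each time expanding at a cycle edge other than~$e$ (always possible, since $e$ lies on no cycle). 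Because $G$ is connected and each expansion joins its two resulting pieces by a new edge, every intermediate graph is connected; hence the fully expanded graph~$F$ is a tree and $\lambda(G) = \rho(F)$. Applying the same sequence of expansions to~$G'$ produces a tree~$F'$ with $\lambda(G') = \rho(F')$; since $e$ is never involved in an expansion, a routine induction on the length of the sequence shows that $F'$ is obtained from $F$ by subdividing every copy of~$e$ that appears in~$F$.

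The key geometric claim is that each copy of~$e$ in~$F$ inherits the endpath-versus-internal status of~$e$ in~$G$. In the endpath case one may assume $B$ is a path with $v$ at an endpoint; then $B$ is acyclic, so no expansion ever takes place inside it, and each copy of~$e$ in~$F$ has as one of its sides an isomorphic copy of~$B$, still a path with the corresponding copy of~$v$ at an endpoint. Hence every copy of~$e$ is on an endpath of~$F$. In the internal case, neither $A$ nor $B$ is a path with $u$ or $v$ at an end; a more intricate analysis---tracking how each expansion modifies the bridge structure locally, and in particular how the degrees of the $u$- and $v$-copies, and the branching vertices of the surrounding structure, are preserved---shows that the $A$- and $B$-sides of every copy of~$e$ in~$F$ still fail to be paths with the appropriate copy of~$u$ or~$v$ at an endpoint. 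Hence every copy of~$e$ in~$F$ is internal.

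Finally, I~would subdivide the copies of~$e$ in~$F$ one at a time and apply Lemma~\ref{lemmaRhoSubdivision} at each step. In the endpath case, every subdivision strictly increases~$\rho$, so $\rho(F') > \rho(F)$ and $\lambda(G') > \lambda(G)$. In the internal case, every subdivision weakly decreases~$\rho$, with strict decrease unless the tree being subdivided is an $H$~graph. But if $G$ has cycles then no intermediate tree can be an $H$~graph: an $H$~graph has a unique non-pendant edge whose sides are each $P_3$, which would force $A$ and $B$ themselves to be $P_3$ centred at $u$ and $v$, making $G$ itself an $H$~graph and hence acyclic (contradiction); and once any copy of~$e$ has been subdivided, the candidate ``central edge'' is a $2$-path rather than a single edge, again precluding the $H$-graph form. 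Hence strict decrease occurs at each step and $\lambda(G') < \lambda(G)$, with equality only in the forest base case where $G$ itself is an $H$~graph. The main obstacle is the status-preservation claim in the internal case, which requires careful tracking of the bridge structure under the expansion operation.
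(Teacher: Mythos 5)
Your overall strategy---use Lemma~\ref{lemmaExpandVertex} to trade $G$ for a tree and then invoke Hoffman--Smith---is the same as the paper's, but the organisation differs in a way that matters. The paper inducts on the number of cycles, performing \emph{one} expansion per step (at a vertex $u$ of a cycle chosen so that $u$ is not an endvertex of $e$) and checking only that the one or two copies of $e$ produced in $K$ remain internal; the induction hypothesis absorbs the rest. By expanding all the way down to a forest $F$ in one pass, you must instead prove that \emph{every} copy of $e$ in $F$ is internal, and this is exactly the step you defer to ``a more intricate analysis \ldots shows that''. That claim is the heart of the lemma and cannot be waved at; note in particular that the sides of a copy of $e$ in $F$ need not be isomorphic to $A$ and $B$, since expansions performed inside a cyclic side alter it. (Your endpath case is correct but heavier than necessary: subdividing an edge on an endpath makes $G$ a proper subgraph of $G'$, so Corollary~\ref{corStrictMono} finishes it in one line, with no expansion at all.)

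The concrete error is in your treatment of the equality case. The exceptional ``$H$ graphs'' of Lemma~\ref{lemmaRhoSubdivision} are not only the six-vertex graph: they are the trees consisting of a central path of \emph{arbitrary} length with two pendant edges attached at each end (all of spectral radius $2$); were this not so, the Hoffman--Smith statement itself would fail for those trees. Consequently (i) an $H$ graph does not have ``a unique non-pendant edge whose sides are each $P_3$'', so your deduction that $F$ being an $H$ graph would force $A$ and $B$ to be $P_3$'s, and hence $G$ to be an $H$ graph, collapses; and (ii) subdividing a central edge of an $H$ graph yields another $H$ graph, so your claim that once a copy of $e$ has been subdivided the $H$-graph form is precluded is simply false. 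You have therefore not excluded the possibility that $F$, or one of the intermediate trees in your chain of subdivisions, is an $H$ graph, in which case your argument only delivers $\lambda(G')\leqslant\lambda(G)$ rather than the strict inequality the lemma asserts when $G$ is cyclic. Repairing this requires a genuine argument that the full expansion of a connected cyclic graph with an internal bridge is never an $H$ graph---for example by exhibiting inside $F$ a proper subtree that already has spectral radius $2$---and this is precisely the point at which your reorganisation of the proof creates work that the paper's single-expansion induction keeps local.
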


\begin{figure}[ht]
$$
\raisebox{12pt}{
\begin{tikzpicture}[scale=0.36]
    \draw [thick] (0,.3)--(1.7,1.5);
    \draw [thick] (0,.3)--(2.1,1.1);
    \draw [thick] (0,.3)--(3,0.75);
    \draw [thick,fill=gray!30!white] (3,1.4) circle [x radius=1.3, y radius=1];
    \draw [thick,fill=gray!30!white] (7.2,1.4) circle [x radius=1.2, y radius=.9];
    \draw [fill] (0,.3) circle [radius=0.15];
    \node[left]at(0,.3){$u$};
    \node[left]at(-1.5,.3){$G$:};
    \draw [fill] (1.7,1.5) circle [radius=0.15];
    \draw [fill] (4.3,1.4) circle [radius=0.15];
    \draw [fill] (6,1.4) circle [radius=0.15];
    \draw [ultra thick] (4.3,1.4)--(6,1.4);
    \node[above]at(5.15,1.4){$e$};
    \node[left]at(1.7,1.6){$v$};
    \node[]at(3,1.4){$\,H_1$};
    \node[]at(7.2,1.4){$H_2$};
    \draw [thick] (0,.3)--(3,-0.75);
    \draw [thick] (0,.3)--(2.33,-1);
    \draw [thick] (0,.3)--(2.17,-1.5);
    \draw [thick] (0,.3)--(2,-2);
    \draw [thick,fill=gray!30!white] (3.4,-1.35) circle [x radius=1.75, y radius=1.15];
    \node[]at(3.4,-1.35){$J$};
\end{tikzpicture}
}
\qquad\qquad\qquad
\begin{tikzpicture}[scale=0.36]
    \draw [thick,fill=gray!30!white] (3,3.8) circle [x radius=1.3, y radius=1];
    \draw [thick] (0,.3)--(1.7,3.9);
    \draw [thick] (0,.3)--(2.1,1.1);
    \draw [thick] (0,.3)--(3,0.75);
    \draw [thick,fill=gray!30!white] (3,1.4) circle [x radius=1.3, y radius=1];
    \draw [fill] (0,.3) circle [radius=0.15];
    \node[left]at(0,.3){$u$};
    \node[left]at(-1.5,1.4){$K$:};
    \draw [fill] (1.7,3.9) circle [radius=0.15];
    %
    \draw [thick,fill=gray!30!white] (7.2,3.8) circle [x radius=1.2, y radius=.9];
    \draw [fill] (4.3,3.8) circle [radius=0.15];
    \draw [fill] (6,3.8) circle [radius=0.15];
    \draw [ultra thick] (4.3,3.8)--(6,3.8);
    \node[above]at(5.15,3.8){$e_1$};
    %
    \draw [thick,fill=gray!30!white] (7.2,1.4) circle [x radius=1.2, y radius=.9];
    \draw [fill] (4.3,1.4) circle [radius=0.15];
    \draw [fill] (6,1.4) circle [radius=0.15];
    \draw [ultra thick] (4.3,1.4)--(6,1.4);
    \node[above]at(5.15,1.4){$e_2$};
    \node[left]at(1.7,4){$v$};
    \node[]at(3,1.4){$\,H_1$};
    \node[]at(3,3.8){$\,H_1$};
    \node[]at(7.2,1.4){$H_2$};
    \node[]at(7.2,3.8){$H_2$};
    \draw [thick] (0,.3)--(3,-0.75);
    \draw [thick] (0,.3)--(2.33,-1);
    \draw [thick] (0,.3)--(2.17,-1.5);
    \draw [thick] (0,.3)--(2,-2);
    \draw [thick,fill=gray!30!white] (3.4,-1.35) circle [x radius=1.75, y radius=1.15];
    \node[]at(3.4,-1.35){$J$};
\end{tikzpicture}
\qquad
$$
\caption{Graphs used in the proof of Lemma~\ref{lemmaSubdividing1}} 
\label{figExpandingVertexEInternal}
\end{figure}
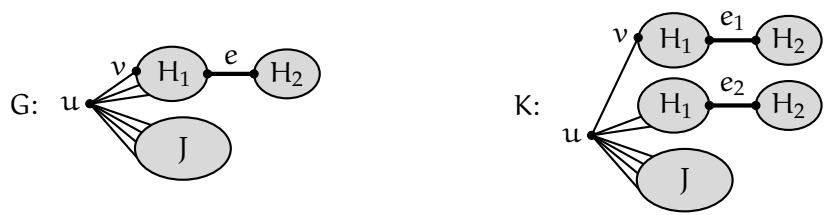
\begin{proof}
If $e$ lies on an endpath, then $G$ is a proper subgraph of $G'$ and so
the result follows from Corollary~\ref{corStrictMono}.
On the other hand, if $e$ is internal and $G$ is acyclic, the conclusion is a consequence of Corollary~\ref{corLambdaRhoTree} and Lemma~\ref{lemmaRhoSubdivision}.
Thus, we need only consider the situation in which
$e$ is internal and $G$ contains a cycle. We proceed by induction on the number of cycles in $G$, acyclic graphs constituting the base case.
Let $uv$ be an edge in a cycle of $G$ such that $u$ is not an endvertex of~$e$.
Now, let $K$ be the result of expanding $G$ at $u$ along $uv$, and let $K'$, analogously, be the result of expanding $G'$ at $u$ along~$uv$.

We consider the effect of the expansion of $G$ upon $e$ and the effect of the expansion of $G'$ upon the two edges resulting from the subdivision of $e$.
If $e$ is in the component of $G-u$ containing~$v$, then $e$ is duplicated in $K$, both copies of $e$ remaining internal (see Figure~\ref{figExpandingVertexEInternal}).
Moreover, $K'$ results from subdividing both copies of $e$ in $K$.
Conversely, if $e$ is in a component of $G-u$ not containing~$v$, then $e$ is not duplicated in $K$ (and remains internal).
In this case, $K'$ results from subdividing $e$ in $K$.
In either case, $K'$ is the result of subdividing internal edges of $K$ (a graph with fewer cycles than $G$), and so
the result follows from the induction hypothesis.
\end{proof}
Now,
the subdivision of an edge
of a row-column graph
that is not on a cycle has no effect on the parity of the cycles.
Hence,
we
have the following conclusion
for the growth rates of geometric grid classes:

\thmbox{
\begin{cor}
If $G(M)$ is connected,
and $G(M')$ is obtained from $G(M)$ by the subdivision of one or more internal edges not on a cycle, then $\gr(\Geom(M'))\leqslant\gr(\Geom(M))$, with equality if and only if $G(M)$ is an $H$ graph.
\end{cor}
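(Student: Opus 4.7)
The plan is to invoke Theorem~\ref{thmGeomClassGrowthRate}, reducing the claim to the statement that $\lambda(G(M'^{\times 2}))\leqslant\lambda(G(M^{\times 2}))$, with equality precisely when $G(M)$ is an $H$ graph. A preliminary observation is that, since each subdivided edge lies on no cycle of $G(M)$, the cycles of $G(M)$ and $G(M')$ coincide as signed subgraphs; in particular, $G(M)$ contains a negative cycle if and only if $G(M')$ does.

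If $G(M)$ has no negative cycle, Corollary~\ref{corNoNegCycles} gives $\gr(\Geom(M))=\lambda(G(M))^2$ and similarly for $M'$, so iteratively applying Lemma~\ref{lemmaSubdividing1}---once per subdivided edge---delivers both the inequality and the equality condition directly. If instead $G(M)$ contains a negative cycle, then $G(M)$ is not an $H$ graph (since an $H$ graph is a tree) and strict inequality is required. By Observation~\ref{obsOddx2} both $G(M^{\times 2})$ and $G(M'^{\times 2})$ are connected, and I would fully expand $G(M^{\times 2})$ via Lemma~\ref{lemmaExpandVertex} to a tree $T$ with $\lambda(G(M^{\times 2}))=\rho(T)$, ordering the expansions so that the lifted copies of the subdivided edges are never the ones expanded along. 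Applying the same sequence of expansions to $G(M'^{\times 2})$ then yields a tree $T'$ that coincides with $T$ apart from further subdivisions at those lifted edges. Since $T$ is a tree, these edges lie on no cycle, and by tracking vertex degrees through the expansions one checks that they remain internal. Lemma~\ref{lemmaRhoSubdivision} therefore gives $\rho(T')\leqslant\rho(T)$, with equality only when $T$ is an $H$ graph; the bipartite negative cycle of $G(M)$ has length at least four, so $G(M^{\times 2})$---and hence $T$---has strictly more than six vertices, ruling out the $H$-graph case and delivering strict inequality.

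The principal obstacle is verifying that the expansion sequence can be ordered to leave the lifted subdivided edges untouched. This is immediate when those edges are bridges of $G(M^{\times 2})$, which occurs whenever at least one component of $G(M)\deledge e$ contains no negative cycle. In the opposite situation---both components of $G(M)\deledge e$ containing a negative cycle---the lifted edges themselves lie on a cycle of $G(M^{\times 2})$, and one handles this by first expanding at cycle edges of $G(M^{\times 2})$ disjoint from the lifted edges, which reduces matters to the bridge case, after which the remaining expansions proceed as before.
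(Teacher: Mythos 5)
The paper dispatches this corollary in a single sentence preceding its statement: since the subdivided edge is not on a cycle, the parity of every cycle is preserved, and so Lemma~\ref{lemmaSubdividing1} applies. Implicitly this means applying Lemma~\ref{lemmaSubdividing1} to $G(M)$ when there are no negative cycles and to $G(\drm)$ otherwise. Your Case A reproduces the first branch exactly. In Case B you route through Lemma~\ref{lemmaExpandVertex} and Lemma~\ref{lemmaRhoSubdivision} instead, but what matters is the obstacle you flag at the end: it is real, and it is precisely what the paper's ``hence'' glosses over. If both components of $G(M)\deledge e$ contain a negative cycle, then both of their doubles are connected, so the two lifted copies of $e$ in $G(\drm)$ lie on a common cycle and are not bridges of $G(\drm)$; Lemma~\ref{lemmaSubdividing1} then does not apply to $G(\drm)$ as stated. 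You have put your finger on a genuine subtlety that the source does not spell out.

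Your proposed repair, however, is not a proof. Each application of Lemma~\ref{lemmaExpandVertex} at $u$ along $uv$ replicates the entire component of $G-u$ containing $v$, which may include the lifted edges; arranging never to expand \emph{along} one of them does not prevent them from being duplicated, and you give no argument that some sequence of expansions eventually renders every copy a bridge. A cleaner way to close the gap is to invoke Lemma~\ref{lemmaSubdividing2}(b) in $G(\drm)$, taking $u$ there to be the opposite lifted endpoint (say $u'$). One of the two pieces is the connected double of a component of $G(M)\deledge e$ that contains a negative cycle, hence contains a cycle and cannot be a path; the other is such a double with the single cut vertex $u'$ deleted, and the free involution on $G(\drm)$ together with the presence of a cycle in that double rules out its being a path having the remaining lifted endpoint as an end. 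Part~(b) then delivers the required strict decrease in this sub-case, after which the rest of your argument goes through.
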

} 

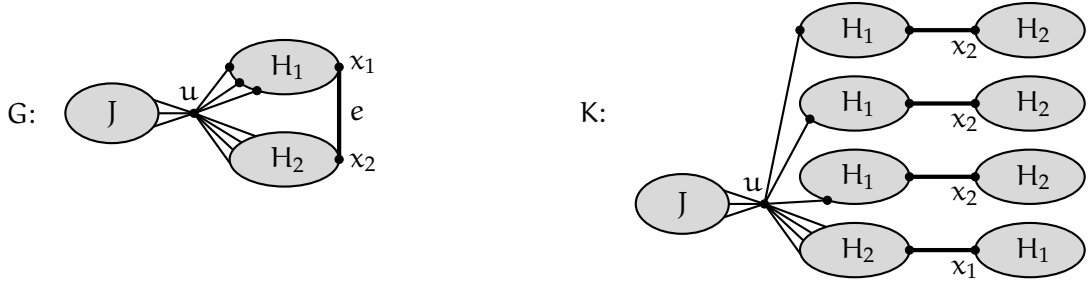
\begin{figure}[ht] 
\vspace{6pt}
$$
\raisebox{34.2pt}{
\begin{tikzpicture}[scale=0.36]
    \draw [thick] (0,3)--(-3,4);
    \draw [thick] (0,3)--(-3,3);
    \draw [thick] (0,3)--(-3,2);
    \draw [thick] (0,3)--(1.3,4.7);
    \draw [thick] (0,3)--(1.662,4.126);
    \draw [thick] (0,3)--(2.3,3.834);
    \draw [thick] (0,3)--(2.4,2.1);
    \draw [thick] (0,3)--(2.1,1.7);
    \draw [thick] (0,3)--(1.8,1.3);
    \draw [thick] (0,3)--(1.5,0.9);
    \draw [ultra thick] (5.3,1.3)--(5.3,4.7);
    \draw [fill=gray!30!white,thick] (3.3,4.7) circle [x radius=2, y radius=1];
    \node[]at(3.3,4.7){$\phantom{{}_.}H_1$};
    \draw [fill=gray!30!white,thick] (3.3,1.3) circle [x radius=2, y radius=1];
    \node[]at(3.3,1.3){$\phantom{{}_.}H_2$};
    \draw [fill=gray!30!white,thick] (-3,3) circle [x radius=1.7, y radius=1.1];
    \node[]at(-3,3){$J$};
    \draw [fill] (0,3) circle [radius=0.15];
    \draw [fill] (1.3,4.7) circle [radius=0.15];
    \draw [fill] (1.662,4.126) circle [radius=0.15];
    \draw [fill] (2.3,3.834) circle [radius=0.15];
    \draw [fill] (5.3,4.7) circle [radius=0.15];
    \draw [fill] (5.3,1.3) circle [radius=0.15];
    \node[above left]at(0,3.1){$u\!\!\!$};
    \node[right]at(5.3,3){$e$};
    \node[right]at(5.3,4.8){$x_1$};
    \node[right]at(5.3,1.2){$x_2$};
    \node[left]at(-5.4,3){$G$:};
    \node[below left]at(0,1.3){$\phantom{x_1}$};
\end{tikzpicture}
}
\qquad\qquad\qquad
\begin{tikzpicture}[scale=0.36]
    \draw [thick] (0,3)--(1.3,9.4);
    \draw [ultra thick] (5.3,9.4)--(7.7,9.4);
    \draw [fill=gray!30!white,thick] (3.3,9.4) circle [x radius=2, y radius=1];
    \node[]at(3.3,9.4){$\phantom{{}_.}H_1$};
    \draw [fill=gray!30!white,thick] (9.7,9.4) circle [x radius=2, y radius=1];
    \node[]at(9.7,9.4){$\phantom{{}_.}H_2$};
    \draw [fill] (1.3,9.4) circle [radius=0.15];
    \draw [fill] (5.3,9.4) circle [radius=0.15];
    \draw [fill] (7.7,9.4) circle [radius=0.15];
    \node[below left]at(8.2,9.4){$x_2$};
    \draw [thick] (0,3)--(1.662,6.126);
    \draw [ultra thick] (5.3,6.7)--(7.7,6.7);
    \draw [fill=gray!30!white,thick] (3.3,6.7) circle [x radius=2, y radius=1];
    \node[]at(3.3,6.7){$\phantom{{}_.}H_1$};
    \draw [fill=gray!30!white,thick] (9.7,6.7) circle [x radius=2, y radius=1];
    \node[]at(9.7,6.7){$\phantom{{}_.}H_2$};
    \draw [fill] (1.662,6.126) circle [radius=0.15];
    \draw [fill] (5.3,6.7) circle [radius=0.15];
    \draw [fill] (7.7,6.7) circle [radius=0.15];
    \node[below left]at(8.2,6.7){$x_2$};
    \draw [thick] (0,3)--(2.3,3.134);
    \draw [ultra thick] (5.3,4)--(7.7,4);
    \draw [fill=gray!30!white,thick] (3.3,4) circle [x radius=2, y radius=1];
    \node[]at(3.3,4){$\phantom{{}_.}H_1$};
    \draw [fill=gray!30!white,thick] (9.7,4) circle [x radius=2, y radius=1];
    \node[]at(9.7,4){$\phantom{{}_.}H_2$};
    \draw [fill] (2.3,3.134) circle [radius=0.15];
    \draw [fill] (5.3,4) circle [radius=0.15];
    \draw [fill] (7.7,4) circle [radius=0.15];
    \node[below left]at(8.2,4){$x_2$};
    \draw [thick] (0,3)--(-3,4);
    \draw [thick] (0,3)--(-3,3);
    \draw [thick] (0,3)--(-3,2);    
    \draw [thick] (0,3)--(2.4,2.1);
    \draw [thick] (0,3)--(2.1,1.7);
    \draw [thick] (0,3)--(1.8,1.3);
    \draw [thick] (0,3)--(1.5,0.9);
    \draw [ultra thick] (5.3,1.3)--(7.7,1.3);
    \draw [fill=gray!30!white,thick] (3.3,1.3) circle [x radius=2, y radius=1];
    \node[]at(3.3,1.3){$\phantom{{}_.}H_2$};
    \draw [fill=gray!30!white,thick] (9.7,1.3) circle [x radius=2, y radius=1];
    \node[]at(9.7,1.3){$\phantom{{}_.}H_1$};
    \draw [fill=gray!30!white,thick] (-3,3) circle [x radius=1.7, y radius=1.1];
    \node[]at(-3,3){$J$};
    \draw [fill] (0,3) circle [radius=0.15];
    \draw [fill] (5.3,1.3) circle [radius=0.15];
    \draw [fill] (7.7,1.3) circle [radius=0.15];
    \node[above left]at(0,3.1){$u\!\:\!\!$};
    \node[below left]at(8.2,1.3){$x_1$};
    \node[left]at(-5.4,6.4){$K$:};
\end{tikzpicture}
$$
\caption{Graphs used in Lemma~\ref{lemmaSubdividing2}}
\label{figExpandingCycle}
\end{figure}
Let us now investigate the effect of subdividing an edge $e$ that lies on a cycle.
We restrict our attention to graphs 
in which there is a vertex $u$ such that 
the two endvertices of $e$ are in distinct
components of $(G\deledge e) - u$.
See the graph at the left of Figure~\ref{figExpandingCycle} for an illustration.
We leave the consideration of multiply-connected graphs that fail to satisfy this condition for future study.
\begin{lemma}\label{lemmaSubdividing2}
Let $G$ be a connected graph and $e=x_1x_2$ an edge on a cycle $C$ of $G$.
Let $u$ be a vertex on $C$, and let $H_1$ and $H_2$ be the distinct
components of $(G\deledge e) - u$ that contain $x_1$ and $x_2$ respectively.
Finally, let $G'$ be the graph obtained from $G$ by subdividing $e$.
  \vspace{-9pt}
  \begin{enumerate}[(a)]
    \item If, for $i\in\{1,2\}$, $H_i$ is a (possibly trivial) path of which $x_i$ is an endvertex, then $\lambda(G')>\lambda(G)$.
    \item If, for $i\in\{1,2\}$, $H_i$ is not a path or is a path of which $x_i$ is not an endvertex, then $\lambda(G')<\lambda(G)$.
  \end{enumerate}
  \vspace{-9pt}
\end{lemma}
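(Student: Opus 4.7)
The plan is to apply Lemma~\ref{lemmaExpandVertex} to both $G$ and $G'$ at the vertex $u$, expanding along the cycle edge $uv_1$, where $v_1$ denotes the neighbour of $u$ on $C$ lying in $H_1$. Writing $K$ and $K'$ for the resulting graphs, one has $\lambda(K)=\lambda(G)$ and $\lambda(K')=\lambda(G')$. By construction, $K$ consists of $G\deledge uv_1$ joined to a fresh pendant copy $H^*$ of the component of $G-u$ containing $v_1$, attached at the copy of $v_1$; the analogous description of $K'$ shows that $K'$ is obtained from $K$ by subdividing two edges, namely $e$ itself in the main piece $G\deledge uv_1$ and its counterpart $e^*$ inside $H^*$. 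It therefore suffices to compare $\lambda(K)$ and $\lambda(K')$, which I would do by passing through the intermediate graph $K_1$ obtained from $K$ by subdividing only $e$, and applying Lemma~\ref{lemmaSubdividing1} twice. When $C$ is the only cycle of $G$ the graph $K$ is a tree; the general case follows by iterating the expansion at further cycle edges, exactly as in the inductive step of the proof of Lemma~\ref{lemmaSubdividing1}. So I may assume $K$ is a tree, in which case neither $e$ nor $e^*$ lies on a cycle of $K$, and the only question is whether each of them lies on an endpath of $K$ or is internal.

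For case~(a), the hypothesis that $H_i$ is a path with $x_i$ as an endvertex ensures that, after removing $uv_1$, the vertex $v_1$ becomes a leaf of the main part of $K$, and the chain from $v_1$ through $H_1$, across $e$, and through $H_2$ consists entirely of vertices of degree~$2$ in $K$ until it reaches either a leaf or a branching vertex (in any event, no later than $u$ itself, which is branching in $K$ because of the joining edge to $H^*$). Thus $e$ lies on an endpath of $K$; the symmetric argument inside the fresh copy $H^*$ establishes the same for $e^*$. Two applications of Lemma~\ref{lemmaSubdividing1} then give $\lambda(K)<\lambda(K_1)<\lambda(K')$, from which $\lambda(G)<\lambda(G')$.

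For case~(b), I would argue conversely that neither $e$ nor $e^*$ lies on any endpath of $K$. The hypothesis on $H_i$ rules out a pendant path through $e$ on either side: if $H_i$ is a path in which $x_i$ is not an endvertex then $x_i$ itself has degree~$3$ in $K$ and the chain branches immediately at $x_i$; while if $H_i$ is not a path then following the sequence of degree-$2$ vertices of $H_i$ starting from $x_i$ must terminate at a vertex of $H_i$ of degree $\geqslant 3$ before any leaf of $K$ is reached. Hence $e$ (and similarly $e^*$) is an internal edge of the tree $K$ that lies on no cycle, and Lemma~\ref{lemmaSubdividing1} gives $\lambda(K_1)\leqslant\lambda(K)$ and $\lambda(K')\leqslant\lambda(K_1)$, with equality only if the graph being subdivided is the six-vertex $H$ graph. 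A short structural check shows that under the case~(b) hypothesis $K$ and $K_1$ both have too many vertices and too many branchings to coincide with the $H$ graph, so both inequalities are strict and $\lambda(G')<\lambda(G)$.

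The step I expect to be the main obstacle is the case~(b) analysis when $H_i$ is not a path but $x_i$ happens to be a leaf of $H_i$: here $x_i$ itself has degree~$2$ in $K$, and I would need to trace the unique sequence of degree-$2$ vertices of $H_i$ leaving $x_i$ and verify rigorously that it terminates at a branching vertex of $H_i$ before any leaf of $K$ is encountered, then combine this with the analogous statement on the other side of $e$ to rule out any endpath of $K$ containing $e$. Confirming that $K$ and $K_1$ can never equal the $H$ graph---needed to promote the weak inequalities to strict ones in case~(b)---requires a further small case check that draws on the specific structural hypothesis of the lemma.
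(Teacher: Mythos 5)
Your overall strategy---expand at $u$ via Lemma~\ref{lemmaExpandVertex} so that subdividing $e$ becomes an instance of Lemma~\ref{lemmaSubdividing1}---is the paper's strategy too, but there is a genuine gap in how you set up the expansion. You expand along the single edge $uv_1$ only. After that one expansion the copies of $e$ need not be off all cycles: every cycle through $e$ must pass through $u$ and enter $H_1$ by some edge from $u$, so if $u$ has a second edge into $H_1$ then $e$ still lies on a cycle in the main part of $K$, and Lemma~\ref{lemmaSubdividing1} (which requires the subdivided internal edge to be \emph{not on a cycle}) does not apply. Your proposed repair---iterate the expansion ``exactly as in the inductive step of Lemma~\ref{lemmaSubdividing1}'' until $K$ is a tree---does not close this: that inductive step only shows that expansion preserves the status of an edge which is \emph{already} internal and off every cycle; it gives no control over an edge still lying on a cycle. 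Moreover, fully expanding to a tree multiplies the copies of $e$ in ways you do not track, so the endpath/internal status of each copy in the final tree is precisely what would remain to be proved.

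The paper's fix is to expand at $u$ along \emph{every} edge joining $u$ to $H_1$. Since any cycle through $e$ must use such an edge, after these expansions every copy of $e$ is a bridge of $K$, and the pendant side of each copy is exactly a copy of $H_1$ attached (via $e$) at $x_1$, or of $H_2$ attached at $x_2$. The hypotheses of (a) and (b) then translate immediately into ``every copy of $e$ lies on an endpath'' or ``every copy of $e$ is internal and on no cycle'', and Lemma~\ref{lemmaSubdividing1} finishes the argument with no reduction to a tree and none of the degree-two chain-chasing you describe: your worry about the subcase where $H_i$ is not a path but $x_i$ is a leaf of $H_i$ disappears, because the component obtained by deleting the copy of $e$ is exactly $H_i$, and one only needs to know whether $H_i$ is a path attached at an endvertex. (Your remaining concern in case (b)---that $K$ might be an $H$ graph---is legitimate; the paper dismisses it in a single clause, and a size-and-branching count of the kind you sketch is the right way to justify it.)
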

\begin{proof}
Let $K$ be the result of repeatedly expanding $G$ at $u$ along every edge joining $u$ to $H_1$.
$K$~has the form shown at the right of Figure~\ref{figExpandingCycle}.
Also let $K'$ be the result of repeatedly expanding $G'$ ($G$~with edge $e$ subdivided) in an analogous way at $u$.
Clearly $K'$ is the same as the
graph that results from subdividing the
copies of $e$
in $K$.

Now, for part (a), since $H_1$ is a path with an end at $x_1$, and also $H_2$ is a path with an end at $x_2$, we see that $K'$ is the result of subdividing edges of $K$ that are on endpaths.
Hence, by the first part of Lemma~\ref{lemmaSubdividing1}, we have $\lambda(G')>\lambda(G)$ as required.

For part (b), since $H_1$ is not a path with an end at $x_1$, and nor is $H_2$ a path with an end at $x_2$,
we see that $K'$ is the result of subdividing internal edges of $K$.
Since $K$ is not an $H$ graph, by Lemma~\ref{lemmaSubdividing1}, we have $\lambda(G')<\lambda(G)$ as required.
\end{proof}
If the conditions for parts (a) and (b) of this lemma both fail to be satisfied (i.e. $H_1$ is a suitable path and $H_2$ isn't, or \emph{vice versa}), then
the proof fails.
This is due to the fact that
expansion leads to at least one copy of $e$ in $K$ being internal
and to another copy of $e$ in $K$ being on an endpath.
Subdivision of the former decreases $\lambda(G)$ whereas subdivision of the latter causes it to increase.
Sometimes, as in Figure~\ref{figGeomSubdivisionEq}, these effects balance exactly; on other occasions one or the other dominates.
We leave a detailed analysis of such cases for later study.

\begin{figure}[t] 
    $$
    \gctwo{4}{1,1,1}{0,1,1,1}
    \quad
    \raisebox{-0.09in}{\begin{tikzpicture}[scale=0.4]
    \draw [thick] (0,0)--(1,0)--(1.707,0.707)--(2.414,0)--(1.707,-0.707)--(1,0);
    \draw [thick] (2.414,0)--(3.414,0);
    \draw [fill] (0,0) circle [radius=0.15];
    \draw [fill] (1,0) circle [radius=0.15];
    \draw [fill] (2.414,0) circle [radius=0.15];
    \draw [fill] (3.414,0) circle [radius=0.15];
    \draw [fill] (1.707,0.707) circle [radius=0.15];
    \draw [fill] (1.707,-0.707) circle [radius=0.15];
    \end{tikzpicture}}
    \quad\quad\quad\quad
    \gcthree{5}{1,1,0,1}{0,1,1}{0,0,1,1,1}
    \quad
    \raisebox{-0.13in}{\begin{tikzpicture}[scale=0.4,rotate=30]
    \draw [thick] (0,0)--(1,0)--(1.5,0.866)--(2.5,0.866)--(3,0)--(2.5,-0.866)--(1.5,-0.866)--(1,0);
    \draw [thick] (2.5,-0.866)--(3,-1.732);
    \draw [fill] (0,0) circle [radius=0.15];
    \draw [fill] (1,0) circle [radius=0.15];
    \draw [fill] (3,0) circle [radius=0.15];
    \draw [fill] (3,-1.732) circle [radius=0.15];
    \draw [fill] (1.5,0.866) circle [radius=0.15];
    \draw [fill] (2.5,0.866) circle [radius=0.15];
    \draw [fill] (1.5,-0.866) circle [radius=0.15];
    \draw [fill] (2.5,-0.866) circle [radius=0.15];
    \end{tikzpicture}}
    \quad\quad\quad\quad
    \gcfour{6}{1,1,0,0,1}{0,1,1}{0,0,1,1}{0,0,0,1,1,1}
    \quad
    \raisebox{-0.22in}{\begin{tikzpicture}[scale=0.4,rotate=45]
    \draw [thick] (0,0)--(1,0)--(1.383,0.924)--(2.307,1.307)--(3.23,0.924)--(3.613,0)--(3.23,-0.924)--(2.307,-1.307)--(1.383,-0.924)--(1,0);
    \draw [thick] (2.307,-1.307)--(2.307,-2.307);
    \draw [fill] (0,0) circle [radius=0.15];
    \draw [fill] (1,0) circle [radius=0.15];
    \draw [fill] (3.613,0) circle [radius=0.15];
    \draw [fill] (1.383,0.924) circle [radius=0.15];
    \draw [fill] (1.383,-0.924) circle [radius=0.15];
    \draw [fill] (3.23,0.924) circle [radius=0.15];
    \draw [fill] (3.23,-0.924) circle [radius=0.15];
    \draw [fill] (2.307,1.307) circle [radius=0.15];
    \draw [fill] (2.307,-1.307) circle [radius=0.15];
    \draw [fill] (2.307,-2.307) circle [radius=0.15];
    \end{tikzpicture}}
    \vspace{-9pt}
    $$
\caption{Standard figures and row-column graphs of geometric grid classes whose growth rates increase
from left to right}
\label{figGeomSubdivisionIncr}
    $$
    \gctwo{5}{1,1,1,1}{0,0,1,1,1}
    \quad
    \raisebox{-0.09in}{\begin{tikzpicture}[scale=0.4]
    \draw [thick] (0.293,0.707)--(1,0)--(1.707,0.707)--(2.414,0)--(1.707,-0.707)--(1,0);
    \draw [thick] (0.293,-0.707)--(1,0);
    \draw [thick] (2.414,0)--(3.414,0);
    \draw [fill] (0.293,0.707) circle [radius=0.15];
    \draw [fill] (0.293,-0.707) circle [radius=0.15];
    \draw [fill] (1,0) circle [radius=0.15];
    \draw [fill] (2.414,0) circle [radius=0.15];
    \draw [fill] (3.414,0) circle [radius=0.15];
    \draw [fill] (1.707,0.707) circle [radius=0.15];
    \draw [fill] (1.707,-0.707) circle [radius=0.15];
    \end{tikzpicture}}
    \quad\quad\quad\:\;
    \gcthree{6}{1,1,1,0,1}{0,0,1,1}{0,0,0,1,1,1}
    \quad
    \raisebox{-0.16in}{\begin{tikzpicture}[scale=0.4,rotate=30]
    \draw [thick] (0.234,0.643)--(1,0)--(1.5,0.866)--(2.5,0.866)--(3,0)--(2.5,-0.866)--(1.5,-0.866)--(1,0);
    \draw [thick] (2.5,-0.866)--(3,-1.732);
    \draw [thick] (0.234,-0.643)--(1,0);
    \draw [fill] (0.234,0.643) circle [radius=0.15];
    \draw [fill] (0.234,-0.643) circle [radius=0.15];
    \draw [fill] (1,0) circle [radius=0.15];
    \draw [fill] (3,0) circle [radius=0.15];
    \draw [fill] (1.5,0.866) circle [radius=0.15];
    \draw [fill] (2.5,0.866) circle [radius=0.15];
    \draw [fill] (3,-1.732) circle [radius=0.15];
    \draw [fill] (1.5,-0.866) circle [radius=0.15];
    \draw [fill] (2.5,-0.866) circle [radius=0.15];
    \end{tikzpicture}}
    \quad\quad\quad\:\;
    \gcfour{7}{1,1,1,0,0,1}{0,0,1,1}{0,0,0,1,1}{0,0,0,0,1,1,1}
    \quad
    \raisebox{-0.23in}{\begin{tikzpicture}[scale=0.4,rotate=45]
    \draw [thick] (0.207,0.609)--(1,0)--(1.383,0.924)--(2.307,1.307)--(3.23,0.924)--(3.613,0) --(3.23,-0.924)--(2.307,-1.307)--(1.383,-0.924)--(1,0);
    \draw [thick] (2.307,-1.307)--(2.307,-2.307);
    \draw [thick] (0.207,-0.609)--(1,0);
    \draw [fill] (0.207,0.609) circle [radius=0.15];
    \draw [fill] (0.207,-0.609) circle [radius=0.15];
    \draw [fill] (1,0) circle [radius=0.15];
    \draw [fill] (3.613,0) circle [radius=0.15];
    \draw [fill] (1.383,0.924) circle [radius=0.15];
    \draw [fill] (1.383,-0.924) circle [radius=0.15];
    \draw [fill] (3.23,0.924) circle [radius=0.15];
    \draw [fill] (3.23,-0.924) circle [radius=0.15];
    \draw [fill] (2.307,1.307) circle [radius=0.15];
    \draw [fill] (2.307,-1.307) circle [radius=0.15];
    \draw [fill] (2.307,-2.307) circle [radius=0.15];
    \end{tikzpicture}}
    \vspace{-9pt}
    $$
\caption{Standard figures and row-column graphs of geometric grid classes whose growth rates are all the same
(equal to 5)
}
\label{figGeomSubdivisionEq}
\vspace{3pt}
    $$
    \gctwo{6}{1,1,1,1}{0,0,1,1,1,1}
    \quad
    \raisebox{-0.09in}{\begin{tikzpicture}[scale=0.4]
    \draw [thick] (0.293,0.707)--(1,0)--(1.707,0.707)--(2.414,0)--(1.707,-0.707)--(1,0);
    \draw [thick] (0.293,-0.707)--(1,0);
    \draw [thick] (3.121,0.707)--(2.414,0)--(3.121,-0.707);
    \draw [fill] (0.293,0.707) circle [radius=0.15];
    \draw [fill] (0.293,-0.707) circle [radius=0.15];
    \draw [fill] (1,0) circle [radius=0.15];
    \draw [fill] (2.414,0) circle [radius=0.15];
    \draw [fill] (3.121,0.707) circle [radius=0.15];
    \draw [fill] (3.121,-0.707) circle [radius=0.15];
    \draw [fill] (1.707,0.707) circle [radius=0.15];
    \draw [fill] (1.707,-0.707) circle [radius=0.15];
    \end{tikzpicture}}
    \quad\quad\quad
    \gcthree{7}{1,1,1,0,1}{0,0,1,1}{0,0,0,1,1,1,1}
    \quad
    \raisebox{-0.16in}{\begin{tikzpicture}[scale=0.4,rotate=30]
    \draw [thick] (0.234,0.643)--(1,0)--(1.5,0.866)--(2.5,0.866)--(3,0)--(2.5,-0.866)--(1.5,-0.866)--(1,0);
    \draw [thick] (2.326,-1.851)--(2.5,-0.866)--(3.44,-1.208);
    \draw [thick] (0.234,-0.643)--(1,0);
    \draw [fill] (0.234,0.643) circle [radius=0.15];
    \draw [fill] (0.234,-0.643) circle [radius=0.15];
    \draw [fill] (1,0) circle [radius=0.15];
    \draw [fill] (3,0) circle [radius=0.15];
    \draw [fill] (1.5,0.866) circle [radius=0.15];
    \draw [fill] (2.5,0.866) circle [radius=0.15];
    \draw [fill] (2.326,-1.851) circle [radius=0.15];
    \draw [fill] (3.44,-1.208) circle [radius=0.15];
    \draw [fill] (1.5,-0.866) circle [radius=0.15];
    \draw [fill] (2.5,-0.866) circle [radius=0.15];
    \end{tikzpicture}}
    \quad\quad\quad
    \gcfour{8}{1,1,1,0,0,1}{0,0,1,1}{0,0,0,1,1}{0,0,0,0,1,1,1,1}
    \quad
    \raisebox{-0.23in}{\begin{tikzpicture}[scale=0.4,rotate=45]
    \draw [thick] (0.207,0.609)--(1,0)--(1.383,0.924)--(2.307,1.307)--(3.23,0.924)--(3.613,0) --(3.23,-0.924)--(2.307,-1.307)--(1.383,-0.924)--(1,0);
    \draw [thick] (1.698,-2.1)--(2.307,-1.307)--(2.916,-2.1);
    \draw [thick] (0.207,-0.609)--(1,0);
    \draw [fill] (0.207,0.609) circle [radius=0.15];
    \draw [fill] (0.207,-0.609) circle [radius=0.15];
    \draw [fill] (1,0) circle [radius=0.15];
    \draw [fill] (3.613,0) circle [radius=0.15];
    \draw [fill] (1.383,0.924) circle [radius=0.15];
    \draw [fill] (1.383,-0.924) circle [radius=0.15];
    \draw [fill] (3.23,0.924) circle [radius=0.15];
    \draw [fill] (3.23,-0.924) circle [radius=0.15];
    \draw [fill] (2.307,1.307) circle [radius=0.15];
    \draw [fill] (2.307,-1.307) circle [radius=0.15];
    \draw [fill] (1.698,-2.1) circle [radius=0.15];
    \draw [fill] (2.916,-2.1) circle [radius=0.15];
    \end{tikzpicture}}
    \vspace{-9pt}
    $$
\caption{Standard figures and row-column graphs of geometric grid classes whose growth rates decrease
from left to right}
\label{figGeomSubdivisionDecr}
\end{figure}
To conclude, we state the consequent result for the growth rates of geometric grid classes.
To simplify its statement and avoid having to concern ourselves directly with cycle parities, we
define $\grm(M)$ to be $G(M)$ when $G(M)$ has no odd cycles and $\grm(M)$ to be $G(\drm)$ otherwise.

\thmbox{
\begin{cor}
  Suppose $\grm(M)$ is connected.
  \vspace{-3pt}
  \begin{enumerate}[(a)]
    \item If $\grm(M')$ is obtained from $\grm(M)$ by subdividing one or more edges that satisfy the conditions of part (a) of Lemma~\ref{lemmaSubdividing2}, then $\gr(\Geom(M'))>\gr(\Geom(M))$.
    \item If $\grm(M')$ is obtained from $\grm(M)$ by subdividing one or more edges that satisfy the conditions of part (b) of Lemma~\ref{lemmaSubdividing2}, then $\gr(\Geom(M'))<\gr(\Geom(M))$.
  \end{enumerate}
  \vspace{-9pt}
\end{cor}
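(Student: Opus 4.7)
The plan is first to reduce the growth rate of $\Geom(M)$ to $\lambda(\grm(M))^2$ uniformly across both cases in the definition of $\grm$, and then to iterate Lemma~\ref{lemmaSubdividing2} over the list of subdivided edges. For the reduction, Theorem~\ref{thmGeomClassGrowthRate} gives $\gr(\Geom(M)) = \lambda(G(\drm))^2$. When $G(M)$ has an odd cycle, $\grm(M) = G(\drm)$ by definition; when $G(M)$ has no odd cycle, Observation~\ref{obsEvenx2} gives $G(\drm) = G(M) + G(M)$, and Corollary~\ref{corLambdaMaxComponents} then yields $\lambda(G(\drm)) = \lambda(G(M)) = \lambda(\grm(M))$. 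In either case $\gr(\Geom(M)) = \lambda(\grm(M))^2$, so parts~(a) and~(b) reduce respectively to the strict inequalities $\lambda(\grm(M')) > \lambda(\grm(M))$ and $\lambda(\grm(M')) < \lambda(\grm(M))$.

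I would then subdivide the specified edges of $\grm(M)$ one at a time, invoking Lemma~\ref{lemmaSubdividing2}(a) or~(b) at each step; composing the resulting finite chain of strict inequalities in $\lambda$ produces the required overall strict inequality.

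The main obstacle is to verify that the hypothesis of Lemma~\ref{lemmaSubdividing2}(a) (respectively~(b)) on a not-yet-processed edge $f = y_1y_2$ of $\grm(M)$ remains valid after prior subdivisions of other edges. Since subdivision merely inserts a degree-two vertex in the interior of an edge, it can neither convert a path into a non-path nor vice versa; and since no original vertex gains or loses any non-path-extending adjacency, the endvertex condition on $y_1$ and $y_2$ is preserved. The only situation requiring care is when a previously subdivided edge $e$ is incident to the witness vertex $u$ on the cycle through $f$, for then removing $u$ after subdivision leaves the freshly inserted vertex as a pendant attached to the other endpoint of $e$; a short case analysis shows that either $u$ continues to be a valid witness or it may be replaced by the inserted vertex, which now lies on the subdivided cycle through $f$ and makes the relevant components paths (case~(a)) or non-paths (case~(b)) with the correct endpoint structure. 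Once this preservation is in place, the iterated application of Lemma~\ref{lemmaSubdividing2} completes the proof.
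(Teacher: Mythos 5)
The paper provides no explicit proof of this corollary, presenting it as a direct consequence of Lemma~\ref{lemmaSubdividing2} and the identity $\gr(\Geom(M))=\lambda(\grm(M))^2$. Your reduction of the growth rate to $\lambda(\grm(M))^2$ is correct and careful: in the odd-cycle case it is Theorem~\ref{thmGeomClassGrowthRate} together with the definition of $\grm$, and in the no-odd-cycle case it follows from Observation~\ref{obsEvenx2} and Corollary~\ref{corLambdaMaxComponents} (equivalently, Corollary~\ref{corNoNegCycles}). Iterating Lemma~\ref{lemmaSubdividing2} edge by edge is the natural route and is surely what the paper intends.

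The weak point is your third paragraph, which tries to show that the hypotheses of Lemma~\ref{lemmaSubdividing2} on a later edge $f=y_1y_2$ survive the earlier subdivision of some $e$. Two gaps are visible. First, the ``short case analysis'' is asserted rather than carried out, and the difficult subcase is precisely the one you name: when $e$ is incident to the witness $u$ for $f$ and the other endpoint $z$ of $e$ lies in $H_1$ or $H_2$. If $z$ is an interior vertex of a path $H_i$, or equals $y_i$ with $H_i$ nontrivial, then after subdividing $e$ the component no longer has the required path-with-correct-endvertex form, so $u$ fails. Second, your proposed repair (replace $u$ by the inserted vertex $w$) presupposes that $w$ lies on a cycle through $f$, i.e.\ that $e$ and $f$ lie on a common cycle; this is automatic when $e$ is on the cycle $C_f$, but not when $e$ is merely some other edge incident to $u$. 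Even when $w$ is admissible, you would still need to verify that the components of $(G_1 \setminus f) - w$ are paths with $y_1,y_2$ as endvertices, which is not immediate. If one reads the corollary as allowing the hypotheses of Lemma~\ref{lemmaSubdividing2} to be rechecked in the current graph at each step, the preservation issue disappears and your first two paragraphs already constitute a complete proof; but under the reading you chose (conditions checked only in $\grm(M)$), the preservation step needs an actual argument or an additional hypothesis such as a common witness vertex for all subdivided edges.
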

} 

Figure~\ref{figGeomSubdivisionIncr} provides an illustration of part (a) and Figure~\ref{figGeomSubdivisionDecr} an illustration of part (b).

\HIDE{
\textbf{Small growth rates}

only others less than $2+\sqrt5$: cycles, pans: cycle + pendant edge, or $4$-cycle + path with 2 edges

--- Ma~\cite{Ma2001,Ma2005} (in Chinese), see Qiao \& Zhan~\cite{QZ2011}
} 

\HIDE{
\textbf{Bounds}
\begin{bullets}
  \item If $\Delta>1$, $\gamma<4\+\Delta-4$ (Heilmann \& Lieb~\cite{HL1972})
  \item $\gamma\geqslant\Delta$ (Lov\'asz \& Pelik\'an~\cite{LP1973})?
  \item $\gamma\geqslant2\overline{d}-1$ (Fisher \& Ryan~\cite{FR1992})
\end{bullets}
Note: $\drm$ has the same maximum and average degree as $M$.
} 

\subsection*{Acknowledgements}
The author would like to thank Penny Bevan
and Robert Brignall
for reading earlier drafts of this paper.
Their
feedback led to significant improvements in its presentation.
Thanks are also due to an anonymous referee whose comments resulted in a much expanded final section.

\emph{S.D.G.}

\bibliographystyle{plain}
{\footnotesize\bibliography{mybib}}

\end{document}